\newtheorem{theorem}{Theorem}
\newtheorem{lemma}[theorem]{Lemma}
\newtheorem{proposition}[theorem]{Proposition}
\newtheorem{remark}[theorem]{Remark}
\newtheorem{cor}[theorem]{Corollary}
\newtheorem{definition}[theorem]{Definition}
\newtheorem*{theorem*}{Theorem}
\newtheorem*{examples}{Examples}
\newtheorem*{proposition*}{Proposition}
\newtheorem*{lemma*}{Lemma}
\renewcommand{\P}{\mathbb{P}}
\newcommand{\R}{\mathbb{R}}
\newcommand{\N}{\mathbb{N}}
\newcommand{\Z}{\mathbb{Z}}
\newcommand{\Q}{\mathbb{Q}}
\newcommand{\cT}{\mathcal{T}}
\newcommand{\be}{\mathbf{e}}
\newcommand{\cC}{\mathcal{C}}
\newcommand{\cW}{\mathcal{W}}
\newcommand{\cM}{\mathcal{M}}
\newcommand{\cU}{\mathcal{U}}
\newcommand{\bT}{\mathbb{T}}
\newcommand{\bk}{\mathbf{k}}
\newcommand{\br}{\mathbf{r}}
\newcommand{\bg}{\mathbf{g}}
\newcommand{\bN}{\mathbf{N}}
\newcommand{\bb}{\mathbf{b}}
\newcommand{\bc}{\mathbf{c}}
\newcommand{\bw}{\mathbf{w}}
\newcommand{\bx}{\mathbf{x}}
\newcommand{\by}{\mathbf{y}}
\newcommand{\bz}{\mathbf{z}}
\newcommand{\hz}{\hat{\zeta}}
\newcommand{\bhz}{\bm{\hat{\zeta}}}
\newcommand{\ga}{\bm{\gamma}}
\newcommand{\bmu}{\bm{\mu}}
\newcommand{\tbmu}{\bm{\tilde{\mu}}}
\newcommand{\bze}{\bm{\zeta}}
\newcommand{\tbze}{\bm{\tilde{\zeta}}}
\newcommand{\tmu}{\tilde{\mu}}
\newcommand{\phii}{\phi^{(i)}}
\newcommand{\tphii}{\tilde{\phi}^{(i)}}
\newcommand{\tmui}{\tilde{\mu}^{(i)}}
\newcommand{\mui}{\mu^{(i)}}
\author{Paul Thévenin}
\title{Critical exponential tiltings for size-conditioned multitype Bienaymé--Galton--Watson trees.}
\date{}
\begin{document}

\maketitle

\begin{abstract}
\small{We consider here multitype Bienaymé--Galton--Watson trees, under the conditioning that the numbers of vertices of given type satisfy some linear relations. We prove that, under some smoothness conditions on the offspring distribution $\bze$, there exists a critical offspring distribution $\tbze$ such that the trees with offspring distribution $\bze$ and $\tbze$ have the same law under our conditioning. This allows us in a second time to characterize the local limit of such trees, as their size goes to infinity. Our main tool is a notion of exponential tilting for multitype Bienaymé--Galton--Watson trees.}
\end{abstract}

\section{Introduction}

The main purpose of this paper is to study the asymptotic behaviour of multitype Bienaymé--Galton--Watson trees (or BGW trees), which are a famous model of random trees used initially to describe the evolution of a population. Roughly speaking, vertices of the tree are individuals who have children independently according to a given distribution. In addition, each individual is given a type (which is for us an integer). We consider here only the case where the number $K$ of possible types is finite. A question of interest is to take such a BGW tree $\cT_n$ conditioned to have size $n$ (where the size of a tree is a parameter that needs to be defined), and investigate the asymptotic properties of the tree $\cT_n$ when $n \rightarrow \infty$. In the monotype case (i.e. when $K=1$), the natural notion of size is the total number of vertices and the question has been extensively investigated. The first results on the structure of $\cT_n$ for $n$ large date back to Kesten \cite{Kes86} (see also Janson \cite{Jan12}), who proves the local convergence of $\cT_n$. In words, balls of fixed radius around the root of the tree converge in distribution. The limiting object, the so-called Kesten tree, is made of an infinite spine on which i.i.d. subtrees are grafted. In another direction, still in the monotype case, Aldous \cite{Ald91a, Ald91b, Ald93} shows the convergence of the tree $\cT_n$ seen as a metric space, where edges of the tree are rescaled to have length $n^{-1/2}$, to a limiting random metric space called Aldous' Brownian Continuum Random Tree.

Analogous results exist when $2 \leq K < \infty$. In this multitype setting, different definitions of the size of a tree are possible: total number of vertices, number of vertices of type $1$, among others. Under diverse assumptions, Pénisson \cite{Pen14}, Abraham-Delmas-Guo \cite{ADG18} or Stephenson \cite{Ste18} characterize the local limit of multitype BGW trees. On the other hand, Miermont \cite{Mie08} and more recently Haas and Stephenson \cite{HS21} prove the convergence of multitype BGW trees, under an assumption of finite covariance, towards Aldous' Continuum Random tree. In all these results, an important assumption made on the tree is that the distribution of the offspring of a vertex is critical (in the case $K=1$, this corresponds to the fact that the average number of children of an individual is $1$).

Again, in the monotype case, such results are known. Janson \cite{Jan12} shows that, when $K=1$ and an offspring distribution $\mu$ is given, it is possible to characterize offspring distributions $\tilde{\mu}$ with the following property: for all $n$, let $\cT_n$ be a $\mu$-BGW tree and $\tilde{\cT}_n$ a $\tilde{\mu}$-BGW tree, conditioned to have $n$ vertices. Then, $\cT_n$ and $\tilde{\cT}_n$ have the same distribution. These distributions $\tilde{\mu}$ are obtained from $\mu$ by performing an operation called exponential tilting. If there exists such a $\tilde{\mu}$ which is critical, then local and scaling limit results that hold for $\tilde{\cT}_n$ also hold for $\cT_n$. Janson \cite{Jan12} covers also cases where such a $\tilde{\mu}$ does not exist, and where condensation phenomena may appear.
Our aim here is to extend the scope of these results, by generalizing the notion of exponential tilting to the multitype case.

\paragraph*{Acknowledgements}

The author would like to thank Svante Janson and Stephan Wagner for insightful discussions, comments and corrections. The author acknowledges the support of the Austrian Science Fund (FWF) under grant P33083.

\paragraph{General notation}

In the whole paper, we let $\N := \{ 0, 1, 2 \ldots \}$ be the set of nonnegative integers and $\N^* := \{ 1, 2, \ldots \}$ be the set of positive integers. For $K \in \N^*$, we set $[K]=\{1, \ldots, K\}$. Furthermore, we will write $\bm{0}$ for $(0, \ldots, 0) \in \R^d$ for a given $d$ (the value of $d$ will always be made clear by the context).

\section{Background on trees}
\label{sec:trees}

We start by recalling some definitions and useful well-known results concerning BGW trees.

\subsection{Plane trees.} We first define plane trees using Neveu's formalism \cite{Nev86}. We let $\cU := \bigcup_{k \geq 0} (\N^*)^k$ be the set of finite sequences of positive integers, with the convention that $(\N^*)^0=\{\varnothing\}$. By a slight abuse of notation, for $k \in \N$, we write an element $u$ of $(\N^*)^k$ by $u=u_1 \cdots u_k$, with $u_1, \ldots, u_k \in \N^*$. For $k \in \N$, $u=u_1\cdots u_k \in (\N^*)^k$ and $i \in \N$, we denote by $ui$ the element $u_1 \cdots u_ki \in (\N^*)^{k+1}$ and by $iu$ the element $iu_1 \cdots u_k \in (\N^*)^{k+1}$. A plane tree $t$ is a subset of $\cU$ satisfying the following three conditions:
(i) $\emptyset \in t$ (the tree has a root); (ii) if $u=u_1\cdots u_n \in t$, then, for all $k \leq n$, $u_1\cdots u_k \in t$ (these elements are called ancestors of $u$); (iii) for any $u \in t$, there exists a nonnegative integer $k_u(t)$ such that, for every $i \in \N^*$, $ui \in t$ if and only if $1 \leq i \leq k_u(t)$ ($k_u(t)$ will be called the number of children of $u$, or the outdegree of $u$, and an element of the form $ui$ is called a child of $u$). The elements of $t$ are called vertices, and we denote by $|t|$ the total number of vertices of $t$. Finally, we denote by $\bT$ the set of plane trees.

\paragraph*{Multitype plane trees}

Fix $K \in \N^*$ and let $[K] := \{ 1, \ldots, K \}$ be the set of types. A $K$-type plane tree is a pair $T := (t,\be_t)$ where $t \in \bT$ is a plane tree and $\be_t: t \mapsto [K]$ is a map associating a type with each vertex of $t$. For $u \in t$, $\be_t(u)$ is called the type of the vertex $u$. For all $ \in [K]$, we also denote by $N_i(T)$ the number of vertices $u$ of the tree $t$ such that $\be_t(u)=i$. We let $\bT^{(K)}$ be the set of $K$-type plane trees and, for $i \in [K]$, we denote by $\bT^{(K,i)}$ the subset of $\bT^{(K)}$ of trees whose root has label $\be_t(\emptyset)=i$.

\subsection{Multitype BGW trees}

We now define our main model of random trees, which we call $K$-type BGW trees. For $K \in \N^*$, set $\cW_K := \bigcup_{n \geq 0} [K]^n$. Let $\bze := (\zeta^{(i)})_{i \in [K]}$ be a family of probability distributions on $\cW_K$. Let $(X_u^i, u \in \cU, i \in [K])$ be a family of independent variables with values in $\cW_K$ such that, for all $(u,i) \in \cU \times [K]$, $X_u^i$ is distributed according to $\zeta^{(i)}$. We also denote by $|X_u^i|$ the size of the vector $X_u^i$. Now fix $i \in [K]$. We recursively construct a (random) $K$-type tree $\cT^{(i)} := (t, \be_t)$ with values in $\bT^{(K,i)}$, as follows:
\begin{itemize}
\item $\varnothing \in t, \be_t(\varnothing) = i$;
\item if $u \in t$ and $\be_t(u) = j$, then, for $k \in \N^*$, $uk \in t$ if and only if $1\leq k \leq |X_u^j|$ and in this case $\be_t(uk)=X_u^j(k)$.
\end{itemize}

In other words, the root of $\cT^{(i)}$ has type $i$ and vertices of type $j$ in $\cT^{(i)}$ have children independently according to $\zeta^{(j)}$. We call $\cT^{(i)}$ a $\bze$-BGW tree. Note that $\cT^{(i)}$ may be finite or infinite.

It is useful in our context to define the \textit{projection} of the family $\bze$. First, for any $w \in \cW_K$ and $j \in [K]$, let $w^{(j)}$ be the number of $j$'s in $w$. Define the projection of $w$ as the element $p(w) = (w^{(1)}, \ldots, w^{(K)}) \in \N^K$. For $i \in [K]$, denote by $\mu^{(i)}$ the probability distribution on $\N^K$ defined by: for all $(k_1, \ldots, k_K) \in \N^K$,
\begin{align*}
\mu^{(i)}(k_1, \ldots, k_K) = \sum_{\substack{w \in \cW_K \\ p(w)=(k_1, \ldots, k_K)}} \zeta^{(i)}(w). 
\end{align*}

It turns out that numerous asymptotic structural properties of $\cT^{(i)}$ only depend on the projection $\bmu:=(\mu^{(i)}, i \in [K])$. 
In this paper, we will only consider nondegenerate $\bze$, that is, such that its projection $\bmu$ satisfies:
\begin{align*}
\exists i \in [K], \mui\left( \left\{ \bz, \sum_{j \in [K]} z_j \neq 1 \right\} \right) > 0.
\end{align*}

We define the mean matrix of $\bze$, $M := (m_{i,j})_{i,j \in [K]}$ as the $K \times K$ matrix such that
\begin{align*}
m_{i,j} = \sum_{\bz \in \N^K} z_j \mu^{(i)}(\bz).
\end{align*}
In other words, $m_{i,j}$ is the expected number of children of type $j$ of a vertex of type $i$.

We say that $\bmu$ is entire if, for all $i$, the generating function $\phii$ of $\mui$ is entire, and we say that $\bze$ is entire if its projection $\bmu$ is entire. We say that $\bze$ is critical (by convention, we will also say that its projection $\bmu$ is critical) if the spectral radius $\rho(M)$ of $M$ is equal to $1$. We say that $\bze$ is irreducible (again, we also say that $\bmu$ is irreducible) if, for all $i,j \in [K]$, there exists $p \in \N^*$ such that $M^p_{i,j} > 0$. In particular, all these properties of $\bze$ only depend on its projection $\bmu$.

\subsection{Conditioning a $K$-type tree}

\paragraph*{History and results}

The asymptotic structure of large multitype BGW trees has been a topic of interest in the past few years. People have in particular studied the so-called scaling limit of such trees: seeing a tree as a metric space, does $\cT^{(i)}$, conditioned to have a large size, converge after renormalization as a metric space? In the monotype case, the notion of size is usually the number of vertices in the tree. Under mild conditions, Aldous \cite{Ald91a} shows that a $\zeta$-BGW tree conditioned to have $n$ vertices converges, after rescaling distances by $\sqrt{n}$, to a limiting object called Aldous' Brownian Continuum Random Tree (or CRT). In the multitype case, there are many possible notions of size, and thus many possible conditionings: by the total number of vertices, by the number of vertices of a given type or by the numbers of vertices of each type for example. Haas-Stephenson \cite{HS21} (see also Miermont \cite{Mie08} for a slightly weaker result) proves that, under a finite covariance assumption, a $K$-type BGW tree $\cT_n$ conditioned to have $n$ vertices of type $1$ converges after renormalization towards the Brownian CRT. One of their crucial hypotheses is that the offspring distribution $\bze$ that they consider must be critical.

On the other hand, a lot of attention has been given to the so-called local limit of multitype BGW trees. We say that the tree $\cT_*$ is the local limit of a sequence $(\cT_n)$ of trees if, for any fixed $r \geq 0$, the ball of radius $r$ centered at the root of $\cT_n$, seen as a random rooted plane tree, converges in distribution towards the ball of radius $r$ centered at the root of $\cT_*$. In the monotype case, Kesten \cite{Kes86} first introduced a discrete infinite tree, Kesten's tree, which is the local limit of size-conditioned $\mu$-BGW trees, where $\mu$ is any critical offspring distribution (see Janson \cite{Jan12} for a proof). Recently, some multitype generalizations have been proven, under different conditionings. A vector $(a_1, \ldots, a_K) \in [0,1]^K$ of sum $1$ being given, Pénisson \cite{Pen14} (see also Abraham-Delmas-Guo \cite{ADG18}) proves under some smoothness condition the local convergence of the $K$-type tree $\cT_{\bk(n)}$ conditioned to have $k_i(n)$ vertices of type $i$, where $\bk(n):=(k_i(n), i \in [K])$ is a sequence of vectors such that, for all $i$,
\begin{align*}
\lim_{n \rightarrow \infty} \frac{k_i(n)}{\sum_{j=1}^K k_j(n)} = a_i.
\end{align*}
Stephenson \cite{Ste18} shows, under an assumption of exponential
moments, the local convergence of a critical multitype BGW tree conditioned on a linear combination of its type population. Again, their main assumption is that the offspring distribution $\bze$ is critical. See Section \ref{sec:convergenceoftrees} for more details.

The main goal of this paper is to obtain such results without the criticality assumption: a non-critical distribution $\bze$ being given, does a $\bze$-BGW tree (under some conditioning) admit a scaling limit or a local limit? In the monotype case, it turns out (see Janson \cite[Section $4$]{Jan12}) that, under very mild conditions on a distribution $\bze$, a size-conditioned $\bze$-BGW tree converges after renormalization to the Brownian CRT, and locally to Kesten's tree. In particular, it is the case when $\bze$ is entire, or when $\bze$ is supercritical.
In the multitype case, Pénisson \cite[Lemma $5.3$]{Pen14} shows that, under a smoothness condition, a $\bze$-BGW tree conditioned to have $k_i$ vertices of type $i$ for all $i \in [K]$ is distributed as a $\tbze$-BGW tree with the same conditioning, for some critical offspring distribution $\tbze$. Our goal is to extend this result, which would allow to prove limit results for possibly non-critical multitype BGW trees.

\paragraph*{General conditionings}

We consider a fairly large class of conditionings. Fix $L \geq 1$ and let $\Gamma \in \mathcal{M}_{L,K}(\R)$. Fix $i \in [K]$, a $L$-tuple $\bg := (g_1, \ldots, g_L) \in \R^L$. We consider the tree $\cT^{(i)}_{\Gamma,\bg}$, which is the $\bze$-BGW tree $\cT^{(i)}$ under the conditioning
\begin{equation}
\label{eq:gamma}
\Gamma \begin{pmatrix}
N_1(\cT^{(i)})\\
\vdots \\
N_K(\cT^{(i)})
\end{pmatrix}
= 
\begin{pmatrix}
g_1\\
\vdots \\
g_L
\end{pmatrix}.
\end{equation}

Observe that
\begin{itemize}
\item[(i)] if $\Gamma = \begin{pmatrix}
\gamma_1 & \cdots & \gamma_K 
\end{pmatrix} \in \cM_{1,K}(\N)$, we are in Stephenson's case;

\item[(ii)] if $\Gamma$ is the identity matrix $\in \cM_{K,K}(\Z)$, we are in Pénisson and Abraham-Delmas-Guo's case;
\item[(iii)] if $\Gamma = \begin{pmatrix}
1 & 0 & \cdots & 0
\end{pmatrix} \in \cM_{1,K}(\Z)$, we are in Miermont and Haas-Stephenson's case.
\end{itemize}

\begin{definition}
\label{def:gammaequiv}
Fix $L \geq 1$ and $\Gamma \in \cM_{L,K}(\R)$. We say that two families $\bze, \tbze$ are $\Gamma$-equivalent if the following two conditions hold:
\begin{itemize}
\item[(i)] for all $\bx \in \cW_K$, all $i \in [K]$, $\zeta^{(i)}(\bx) = 0$ if and only if $\tilde{\zeta}^{(i)}(\bx) = 0$;
\item[(ii)] for all $i \in [K]$, all $\bg$ such that \eqref{eq:gamma} holds for a $\bze$-BGW with positive probability, we have in distribution
\begin{align*}
\cT_{\Gamma,\bg}^{(i)} \overset{(d)}{=} \tilde{\cT}^{(i)}_{\Gamma,\bg},
\end{align*}
where $\tilde{\cT}^{(i)}_{\Gamma,\bg}$ is a $\tbze$-BGW tree with root label $i$ conditioned on \eqref{eq:gamma}.
\end{itemize}
\end{definition}

Observe that, under these assumptions, irreducibility of $\bze$ implies irreducibility of $\tbze$. It is clearly an equivalence relation. 
\begin{remark}
\label{rk:equiv}
In the monotype case, it turns out that we can obtain (i) as a consequence of (ii), and thus we only need Assumption (ii). However, in the multitype case, it may happen that (ii) does not imply (i), and thus that assuming only (ii) does not define an equivalence relation. For example, consider the case $K=2$, the matrix $\Gamma := I_2$ and the two distributions $\bze, \tbze$ defined as follows:

\begin{itemize}
\item $\zeta^{(1)}(\emptyset) = \zeta^{(1)}(1,2) = 1/2, \zeta^{(2)}(\emptyset) = \zeta^{(2)}(1,2) = 1/2$;
\item $\tilde{\zeta}^{(1)}(\emptyset) = \tilde{\zeta}^{(1)}(1,2) = \tilde{\zeta}^{(1)}(1,1,1,1,2) = 1/3, \tilde{\zeta}^{(2)}(\emptyset) = \tilde{\zeta}^{(2)}(1,2) = \tilde{\zeta}^{(2)}(1,1,1,1,2) = 1/3$.
\end{itemize}

In this case, (ii) holds for $\bze$ but not for $\tbze$. Indeed, for any $n \geq 1$, we have that $$\P\left( N_2(\cT^{(1)})=n-1|N_1(\cT^{(1)})=n\right)=1,$$ while $$\P\left( N_2(\tilde{\cT}^{(1)})=n-1|N_1(\tilde{\cT}^{(1)})=n\right) \in (0,1).$$

We conjecture however that, if $\Gamma \in \cM_{1,K}(\Z_+)$, then (ii) implies (i).
\end{remark}

We can now state our main theorem. To this end, the technical conditions that we will assume on the offspring distribution $\bze$ are the following: 
\begin{enumerate}[label=(\textbf{A.\arabic*})]
\item $\bze$ is entire.
\label{cond:entire}
\item  For all $j \in [K]$, $\zeta^{(j)}(\emptyset) > 0$. \label{cond:jamaiszero}

\item for all $i \in [K]$, for $b_i$ large enough, uniformly in $(b_j)_{j \neq i} \in \R_+^{K-1}$, we have $\frac{\partial \phi^{(i)}(b_1, \ldots, b_K)}{\partial b_i} \geq \phi^{(i)}(b_1, \ldots, b_K)/b_i$.  \label{cond:escape}
\end{enumerate}

Here, recall that $\phi^{(i)}$ denotes the generating function of $\mu^{(i)}$, where $\bmu := (\mu^{(1)}, \ldots, \mu^{(K)})$ is the projection of $\bze$.
We will also only consider matrices $\Gamma$ satisfying the following condition:

\begin{enumerate}[label=(\textbf{B})]
\item There exists $\ga \in (Ker \Gamma)^\perp$ such that $\ga \in (\N^*)^K$ and $\gamma_i=1$ for some $i \in [K]$. \label{condition}
\end{enumerate}

\begin{remark}
Observe that \ref{cond:entire}-\ref{cond:escape} are technical smoothness conditions on the distribution $\bze$, while \ref{condition} is only a condition on the matrix $\Gamma$. It is not clear that these conditions can be easily lifted, see Section \ref{sec:questions}.
\end{remark}

\begin{examples}
An interesting example is when there exist $f_1, \ldots, f_K$ entire functions with nonnegative coefficients such that, for all $i \in [K]$, $\phi^{(i)} = e^{f_i-f_i(1,\ldots,1)}$ and $f_i(0,\ldots, 0, b_i, 0, \ldots, 0) \rightarrow \infty$ as $b_i \rightarrow \infty$. In this case, it is clear that Assumptions \ref{cond:entire}-\ref{cond:escape} are satisfied. This includes, for example, exponentials of polynomials.
\end{examples}

We can now expose our main theorem, which states the existence of critical $\Gamma$-equivalent distributions under Assumptions \ref{cond:entire}-\ref{cond:escape} and \ref{condition}.

\begin{theorem}
\label{thm:main}
Let $\bze$ be a probability distribution satisfying \ref{cond:entire}-\ref{cond:escape} and a matrix $\Gamma$ such that \ref{condition} holds. Then, there exists a critical distribution $\tbze$ that is $\Gamma$-equivalent to $\bze$. Furthermore, if $rk(\Gamma)=1$ and $\bze$ is irreducible, then this critical distribution is unique.
\end{theorem}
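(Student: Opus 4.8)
The plan is to realise the relevant $\Gamma$-equivalent distributions as \emph{exponential tiltings} of $\bze$, to reduce the existence statement via \ref{condition} to producing one such tilting with critical mean matrix, and to obtain it by a one-parameter continuation argument whose two extremities are controlled by \ref{cond:jamaiszero} and \ref{cond:escape}. For $\mathbf d=(d_1,\dots,d_K)\in(\R_+^*)^K$ I would set $\tilde\zeta^{(i)}_{\mathbf d}(w):=\phi^{(i)}(\mathbf d)^{-1}\big(\prod_{j\in[K]}d_j^{w^{(j)}}\big)\zeta^{(i)}(w)$ for $w\in\cW_K$; by \ref{cond:entire}--\ref{cond:jamaiszero}, $0<\phi^{(i)}(\mathbf d)<\infty$, so this is a probability measure on $\cW_K$ with the same support as $\zeta^{(i)}$, its projection has generating functions $\mathbf x\mapsto\phi^{(i)}(d_1x_1,\dots,d_Kx_K)/\phi^{(i)}(\mathbf d)$, and its mean matrix $\tilde M(\mathbf d)$ has entries $\tilde m_{ij}(\mathbf d)=d_j\,\partial_{x_j}\phi^{(i)}(\mathbf d)/\phi^{(i)}(\mathbf d)$. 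Multiplying vertex weights over a finite tree $T$ with root of type $i$, and using that the children-type-counts summed over all vertices of $T$ equal $\bN(T):=(N_1(T),\dots,N_K(T))$ minus the indicator of $i$, one gets
\begin{equation*}
\P_{\tbze_{\mathbf d}}\big(\cT^{(i)}=T\big)=d_i^{-1}\exp\!\big(\langle\mathbf v(\mathbf d),\bN(T)\rangle\big)\,\P_{\bze}\big(\cT^{(i)}=T\big),\qquad v_j(\mathbf d):=\log\frac{d_j}{\phi^{(j)}(\mathbf d)} .
\end{equation*}
Thus, if $\mathbf v(\mathbf d)\in(Ker\,\Gamma)^{\perp}$ (the row space of $\Gamma$), the Radon--Nikodym factor is constant on each fibre $\{\Gamma\bN=\bg\}$ and $\tbze_{\mathbf d}$ is $\Gamma$-equivalent to $\bze$. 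Since by \ref{condition} the vector $\ga$ satisfies $\ga\in(\N^*)^K\cap(Ker\,\Gamma)^{\perp}$, we have $\mathrm{span}(\ga)\subseteq(Ker\,\Gamma)^{\perp}$, and also $\{\Gamma\bN=\bg\}$ consists of finite trees (the weighted size $\langle\ga,\bN\rangle$ is a finite constant there), so the conditioning is meaningful even when $\bze$ is supercritical. It therefore suffices to find $\mathbf d\in(\R_+^*)^K$ and $\mu\in\R$ with $\mathbf v(\mathbf d)=\mu\ga$ and $\rho(\tilde M(\mathbf d))=1$.

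\textbf{The continuation argument.} Write $\mathbf s=\log\mathbf d$ and $\Lambda_i(\mathbf s)=\log\phi^{(i)}(e^{s_1},\dots,e^{s_K})$; each $\Lambda_i$ is convex, its gradient is the $i$-th row of $\tilde M(\mathbf d)$, and $\mathbf v(\mathbf d)=\mu\ga$ rewrites as $\mathbf s-\Lambda(\mathbf s)=\mu\ga$, solved at $\mu=0$ by $\mathbf d=\mathbf 1$ (where $\tilde M(\mathbf 1)=M$). I would follow the branch of solutions through $(\mathbf 1,0)$ produced by the monotone iterations $\mathbf x\mapsto(\phi^{(j)}(\mathbf x)e^{\mu\gamma_j})_{j\in[K]}$ started at $\mathbf 1$. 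For $\mu\le 0$ this iteration decreases to a fixed point $\mathbf d(\mu)\in(\mathbf 0,\mathbf 1]$ with $d_j(\mu)\le e^{\mu\gamma_j}$, so $\mathbf d(\mu)\to\mathbf 0$ as $\mu\to-\infty$; then $\phi^{(i)}(\mathbf d(\mu))\to\zeta^{(i)}(\emptyset)>0$ by \ref{cond:jamaiszero} while the numerators $d_j\,\partial_{x_j}\phi^{(i)}$ vanish, whence $\rho(\tilde M(\mathbf d(\mu)))\to 0$. For $\mu\ge 0$ the iteration increases, $\mathbf d(\mu)\ge\mathbf 1$, and $d_j(\mu)\ge e^{\mu\gamma_j}\to\infty$; by \ref{cond:escape}, once $d_i$ is large $\tilde m_{ii}(\mathbf d)\ge 1$, hence $\rho(\tilde M(\mathbf d(\mu)))\ge\tilde m_{ii}(\mathbf d(\mu))\ge 1$ for $\mu$ large. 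The linearisation of the iteration at a fixed point $\mathbf d(\mu)$ is conjugate to $\tilde M(\mathbf d(\mu))$, so the implicit function theorem keeps the branch smooth and $\mu\mapsto\rho(\tilde M(\mathbf d(\mu)))$ continuous as long as $1\notin\mathrm{spec}(\tilde M(\mathbf d(\mu)))$, and the branch can only fail to prolong at a point where $1\in\mathrm{spec}(\tilde M(\mathbf d(\mu)))$, i.e.\ where $\rho(\tilde M(\mathbf d(\mu)))\ge 1$. If $\bze$ is critical, $\mathbf d=\mathbf 1$ works; if $\bze$ is subcritical, $\rho(M)<1$ at $\mu=0$ and we run along $\mu\ge 0$; if $\bze$ is supercritical, $\rho(M)>1$ at $\mu=0$ and we run along $\mu\le 0$. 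In each non-critical case the intermediate value theorem applied to the continuous function $\mu\mapsto\rho(\tilde M(\mathbf d(\mu)))$, which starts strictly on one side of $1$ and ends on the other side (or first forces the branch to degenerate, which happens exactly when it hits $1$), yields $\mu^{*}$ with $\rho(\tilde M(\mathbf d(\mu^{*})))=1$. Then $\tbze:=\tbze_{\mathbf d(\mu^{*})}$ is a critical distribution $\Gamma$-equivalent to $\bze$.

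\textbf{Uniqueness when $\mathrm{rk}\,\Gamma=1$ and $\bze$ is irreducible.} Normalising $\Gamma$ to the single row $\ga$, I would first check that any $\Gamma$-equivalent $\tbze$ has the form $\tbze_{\mathbf d}$ with $\mathbf v(\mathbf d)\in\mathrm{span}(\ga)$: the density $\P_{\tbze}(\cT^{(i)}=T)/\P_{\bze}(\cT^{(i)}=T)$ depends only on $\langle\ga,\bN(T)\rangle$ and is multiplicative under grafting of subtrees (branching property of BGW trees), hence equal to $e^{\alpha_i+\mu\langle\ga,\bN(T)\rangle}$, and comparing with one-level trees identifies $\tbze$ with $\tbze_{\mathbf d}$ for a $\mathbf d$ satisfying $\mathbf v(\mathbf d)=\mu\ga$. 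Now suppose $\mathbf s^{(1)},\mathbf s^{(2)}$ both satisfy $\mathbf s^{(k)}-\Lambda(\mathbf s^{(k)})=\mu_k\ga$ with $\rho(\tilde M(\mathbf s^{(k)}))=1$, and let $\mathbf w^{(1)}>0$ be a left Perron eigenvector of $\tilde M(\mathbf s^{(1)})$ (positive by irreducibility, inherited from $\bze$). Convexity of $\Lambda_i$ gives $\Lambda_i(\mathbf s^{(2)})\ge\Lambda_i(\mathbf s^{(1)})+\langle\nabla\Lambda_i(\mathbf s^{(1)}),\mathbf s^{(2)}-\mathbf s^{(1)}\rangle$; summing against $\mathbf w^{(1)}$ and using that $\mathbf w^{(1)}$ is a $1$-eigenvector of $\tilde M(\mathbf s^{(1)})$ gives $\langle\mathbf w^{(1)},(\Lambda(\mathbf s^{(2)})-\mathbf s^{(2)})-(\Lambda(\mathbf s^{(1)})-\mathbf s^{(1)})\rangle\ge 0$, i.e.\ $(\mu_1-\mu_2)\langle\mathbf w^{(1)},\ga\rangle\ge 0$, so $\mu_1\ge\mu_2$; by symmetry $\mu_1=\mu_2$, and then equality in the convexity bound, together with nondegeneracy and irreducibility of $\bze$, forces $\mathbf s^{(1)}=\mathbf s^{(2)}$.

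\textbf{Expected main difficulty.} I expect the hard part to be the global control of the branch in the continuation step: showing it is a connected curve along which $\mathbf d=e^{\mathbf s}$ genuinely degenerates to $\mathbf 0$ at one end and blows up in every coordinate at the other, and that one never stalls at a spurious point where $1\in\mathrm{spec}(\tilde M(\mathbf d))$ while $\rho(\tilde M(\mathbf d))>1$. This is exactly where \ref{cond:jamaiszero} (to kill $\tilde M$ near $\mathbf 0$) and \ref{cond:escape} (to push a diagonal entry of $\tilde M$ above $1$ at infinity) are indispensable, alongside the conjugacy linking degeneracy of the branch to the eigenvalue $1$ of $\tilde M(\mathbf d)$.
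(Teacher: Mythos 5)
Your tilting set-up and the sufficiency criterion ($\mathbf{v}(\mathbf{d})\in(Ker\,\Gamma)^\perp$ makes the Radon--Nikodym factor constant on fibres) coincide with the paper's Proposition~\ref{prop:goodexptilt}, but the heart of the existence proof has genuine gaps. Your continuation is anchored at $\mathbf{d}=\mathbf{1}$ and the direction you prescribe in the supercritical case fails: for $\mu\le 0$ the decreasing iteration started at $\mathbf{1}$ jumps at $\mu=0$ (at $\mu=0$ it sits at the fixed point $\mathbf{1}$, while for any $\mu<0$ it converges to a fixed point below the extinction-probability vector), so $\mu\mapsto\rho(\tilde M(\mathbf{d}(\mu)))$ is not continuous and the intermediate value argument breaks; indeed in the monotype example $\phi(x)=e^{2(x-1)}$ there is \emph{no} critical tilting with $\mu\le 0$ at all (the critical point is $d^*=1/2$, $\mu^*=1-\log 2>0$), so ``run along $\mu\le 0$ when supercritical'' cannot work. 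Your fallback --- ``the branch can only fail to prolong where it hits $1$'' --- conflates $1\in\mathrm{spec}(\tilde M)$ with $\rho(\tilde M)=1$: in the multitype case a degeneracy of the branch only yields $\rho\ge 1$ (this is exactly the content of the paper's Theorem~\ref{thm:rhoatleast1}, whose eigenvector construction is the technical core and is absent from your proposal), which is useless when you start from $\rho(M)>1$; moreover the branch can also fail by escaping to infinity in finite $\mu$, and the convergence of your increasing iteration for $\mu\ge 0$ is asserted, not proved. The paper avoids all of this by anchoring the curve at $\bm{0}$ (Theorem~\ref{thm:aroundzero}, which is where \ref{condition} with $\gamma_1=1$ and \ref{cond:jamaiszero} enter), where $\tilde\rho<1$ is guaranteed (Lemma~\ref{lem:rho<1}), and then running the three-case analysis (unbounded $\cC$ via \ref{cond:escape}; $\cC\cap E\neq\emptyset$ via Theorem~\ref{thm:rhoatleast1}; compact $\overline{\cC}$ with $\cC\cap E=\emptyset$ via the manifold/limit-point argument), so that the intermediate value theorem always runs from below $1$ upward.

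The uniqueness argument also has a gap at its first step: you assume that every critical distribution $\Gamma$-equivalent to $\bze$ is an exponential tilting with $\mathbf{v}(\mathbf{d})\in\mathrm{span}(\ga)$, deduced from multiplicativity of the density over grafting. The paper explicitly warns that its tilting criterion is only a \emph{sufficient} condition for $\Gamma$-equivalence and does not characterize it in the multitype setting (cf.\ Remark~\ref{rk:equiv} and open question (Q.6)); the functional-equation step that works for Janson's monotype argument is precisely what is not available here, so your classification claim is unsupported. The paper instead derives uniqueness (for $rk(\Gamma)=1$, $\bze$ irreducible) from the local limit: by Corollary~\ref{cor:loclim} and Stephenson's theorem, the conditioned trees converge to the Kesten-like tree of the critical equivalent distribution, and that limit determines $\tbze$, forcing uniqueness. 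Finally, even granting your classification, the last step ``equality in the convexity bound forces $\mathbf{s}^{(1)}=\mathbf{s}^{(2)}$'' needs an extra argument: equality only gives that each $\Lambda_i$ is affine along the difference direction, and one must combine the two fixed-point equations with Perron--Frobenius (the difference would be a $1$-eigenvector of the critical irreducible $\tilde M$, hence positive) and \ref{cond:jamaiszero} to reach a contradiction; ``nondegeneracy and irreducibility'' alone do not suffice, as the example of Remark~\ref{rk:equiv} (where all offspring vectors satisfy a fixed linear relation) shows.
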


It is worth noticing that, in the monotype case, only Assumption \ref{cond:entire} is needed, as \ref{cond:escape}, \ref{cond:jamaiszero} and \ref{condition} come for free. However, the proof (see \cite{Jan12}) makes use of a continuity argument which is not valid anymore with two or more types.

The main idea to prove Theorem \ref{thm:main} is to introduce a family of multitype exponential tiltings, generalizing the results of \cite{Jan12}. The assumptions made on $\bze$ ensure the existence of a critical exponential tilting of $\bze$ with is $\Gamma$-equivalent to $\bze$. In particular, the following holds.

\begin{cor}
\label{cor:loclim}
Let $\bze$ be an irreducible distribution satisfying \ref{cond:entire}-\ref{cond:escape} and $\Gamma$ satisfying \ref{condition}. Fix $j \in [K]$, and let $(k_n, n \geq 1)$ be a sequence of positive integers such that $k_n \rightarrow \infty$ and, for all $n$, 
\begin{align*}
\P\left( \sum_{i=1}^K \gamma_i N_i(\cT^{(j)}) = k_n \right) > 0.
\end{align*} 
Then, there exists a discrete infinite $K$-type tree $\cT_*$ such that
\begin{align*}
\cT^{(j)}_{\Gamma, k_n} \underset{n \rightarrow \infty}{\overset{(loc)}{\rightarrow}} \cT_*,
\end{align*}
where $\cT^{(j)}_{\Gamma, k_n}$ is the tree $\cT^{(j)}$ conditioned on $\sum_{i=1}^K \gamma_i N_i(\cT^{(j)}) = k_n$.
\end{cor}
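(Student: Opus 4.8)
The plan is to reduce Corollary \ref{cor:loclim} to the known critical case via Theorem \ref{thm:main}. First I would invoke Theorem \ref{thm:main}: since $\bze$ is irreducible, satisfies \ref{cond:entire}--\ref{cond:escape}, and $\Gamma = (\gamma_1, \ldots, \gamma_K) \in \cM_{1,K}$ satisfies \ref{condition} (here $\mathrm{rk}(\Gamma) = 1$, so in fact $\ga$ is proportional to $\Gamma$ itself, which forces $\gamma_i \in \N^*$ after rescaling and $\gamma_i = 1$ for some $i$), there exists a critical distribution $\tbze$ that is $\Gamma$-equivalent to $\bze$, and this $\tbze$ is unique and irreducible. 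By Definition \ref{def:gammaequiv}(ii), for every $n$ such that $\P(\sum_i \gamma_i N_i(\cT^{(j)}) = k_n) > 0$ — note that by condition (i) of $\Gamma$-equivalence the same linear combination has positive probability under $\tbze$ — we have the distributional identity $\cT^{(j)}_{\Gamma, k_n} \overset{(d)}{=} \tilde{\cT}^{(j)}_{\Gamma, k_n}$, where the right-hand side is a $\tbze$-BGW tree conditioned on the same event. Consequently the sequence $(\cT^{(j)}_{\Gamma, k_n})$ converges locally if and only if $(\tilde{\cT}^{(j)}_{\Gamma, k_n})$ does, and to the same limit.

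Next I would apply the known local limit theorem in the critical case to $\tbze$. Since $\tbze$ is critical, irreducible, and inherits the smoothness of $\bze$ — in particular $\tbze$ is entire (hence has exponential moments of all orders), so Stephenson's hypotheses of \cite{Ste18} are met — the size-conditioned tree $\tilde{\cT}^{(j)}_{\Gamma, k_n}$ conditioned on the linear functional $\sum_i \gamma_i N_i = k_n$ converges locally, as $k_n \to \infty$, to a discrete infinite $K$-type tree. Here one should check that the subsequence of $n$ for which the conditioning event has positive probability still has $k_n \to \infty$, which is given by hypothesis; and that along this subsequence the aperiodicity/lattice conditions needed for Stephenson's theorem hold, which again follow from irreducibility together with $\gamma_i = 1$ for some $i$ (this last point guarantees that the span of the support of $\sum_i \gamma_i N_i$ over admissible trees is $1$, so no parity obstruction arises). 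Call the resulting limit $\cT_*$; it is the multitype Kesten tree associated with $\tbze$ and $\Gamma$.

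Combining the two steps gives $\cT^{(j)}_{\Gamma, k_n} \overset{(loc)}{\to} \cT_*$ along the prescribed sequence, which is exactly the statement. I would phrase the proof so that $\cT_*$ is defined directly as the local limit of the $\tbze$-conditioned trees, so no independent construction is needed in this corollary.

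The main obstacle I anticipate is not conceptual but bookkeeping: one must verify carefully that Stephenson's local limit theorem (or the relevant statement one cites) applies verbatim to $\tbze$ under precisely the hypotheses inherited from Theorem \ref{thm:main} — in particular that \ref{condition} with $\mathrm{rk}(\Gamma)=1$ supplies exactly the non-degeneracy (irreducibility plus unit span of the conditioning functional) that the critical theorem requires, and that "$k_n \to \infty$ along the admissible subsequence" is enough (rather than $k_n \to \infty$ through all integers). A secondary subtlety is making explicit that $\Gamma$-equivalence transports the positivity of the conditioning event in both directions, so that the distributional identity is genuinely available for every $n$ in the statement.
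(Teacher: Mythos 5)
Your proposal is correct and follows essentially the same route as the paper: invoke Theorem \ref{thm:main} to obtain a critical $\Gamma$-equivalent $\tbze$, note that it inherits irreducibility, nondegeneracy and entireness (hence the small exponential moments required), and conclude by Stephenson's Theorem (Theorem \ref{thm:loclimmulti}), the conditioned trees having the same law by $\Gamma$-equivalence. One small caution: do not lean on the \emph{uniqueness} part of Theorem \ref{thm:main} here, since in the paper that uniqueness is deduced \emph{from} this corollary, so citing it would be circular --- but your argument only uses existence, so nothing is lost.
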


\paragraph{Overview of the paper}

We start by defining the notion of multitype exponential tiltings and describe a class of $\Gamma$-equivalent distributions in Section \ref{sec:tiltings}. Then, Section \ref{sec:proofs} is devoted to the proof of our main result, Theorem \ref{thm:main}, and Section \ref{sec:convergenceoftrees} to the proof of Corollary \ref{cor:loclim}, concerning local limits of non-critical multitype trees. In the last section, Section \ref{sec:questions}, we list a few open questions, mainly on the possibility of lifting our different assumptions \ref{cond:entire}-\ref{cond:escape} and \ref{condition}.

\section{Exponential tiltings}
\label{sec:tiltings}

In this section, we provide a sufficient criterion for two distributions to be $\Gamma$-equivalent. Observe that, the same way as we define equivalent families of distributions on $\cW_K$, we can define equivalent families of distributions on $\N^K$ as follows.

We denote in what follows $\Gamma \in \cM^{(K)}(\R) := \bigcup_{L \geq 1} \cM_{L,K}(\R)$.

\begin{definition}
Let $\Gamma \in \cM^{(K)}(\R)$.
We say that two families $\bmu$, $\tbmu$ odf distributions on $\N^K$ are $\Gamma$-equivalent if there exist two families $\bze, \tbze$ of distributions on $\cW_K$ such that $\bmu$ is the projection of $\bze$, $\tbmu$ is the projection of $\tbze$, and $\bze$ and $\tbze$ are $\Gamma$-equivalent. 
\end{definition}

\begin{proposition}
\label{prop:gammaequivalence}
The $\Gamma$-equivalence on distributions on $\N^K$ is an equivalence relation.
\end{proposition}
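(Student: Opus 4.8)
The claim is that $\Gamma$-equivalence is an equivalence relation on families of probability distributions on $\N^K$. The plan is to transfer each of the three defining properties — reflexivity, symmetry, transitivity — from the level of distributions on $\cW_K$ (where, as already observed in the text just after Definition~\ref{def:gammaequiv}, $\Gamma$-equivalence \emph{is} an equivalence relation) to the level of projections on $\N^K$. The subtlety is purely bookkeeping: a family $\bmu$ on $\N^K$ typically admits many ``lifts'' $\bze$ on $\cW_K$ with $p(\bze) = \bmu$, and the definition of $\Gamma$-equivalence for $\bmu, \tbmu$ only requires that \emph{some} pair of lifts be $\Gamma$-equivalent. So one must check the relation does not depend badly on the choice of lift.

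\emph{Reflexivity.} Given $\bmu$ on $\N^K$, pick any lift $\bze$ on $\cW_K$ (for instance the canonical one supported on nondecreasing words, $\zeta^{(i)}(x_1 \cdots x_m) = \mu^{(i)}(k_1, \ldots, k_K)\mathbbm{1}_{x_1 \le \cdots \le x_m}$ where $(k_1, \ldots, k_K)$ is the type-count vector of the word; any fixed rule distributing $\mu^{(i)}(\bk)$ among the $\binom{|\bk|}{k_1, \ldots, k_K}$ words with projection $\bk$ works). Then $\bze$ is $\Gamma$-equivalent to itself — both conditions (i) and (ii) of Definition~\ref{def:gammaequiv} are trivially symmetric/reflexive — so $\bmu$ is $\Gamma$-equivalent to $\bmu$ by definition.

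\emph{Symmetry.} Suppose $\bmu$ is $\Gamma$-equivalent to $\tbmu$. By definition there are lifts $\bze, \tbze$ of $\bmu, \tbmu$ respectively with $\bze$ $\Gamma$-equivalent to $\tbze$. Since $\Gamma$-equivalence on $\cW_K$-families is symmetric (condition (i) is an ``if and only if'', condition (ii) is an equality in distribution), $\tbze$ is $\Gamma$-equivalent to $\bze$, and reading Definition of $\Gamma$-equivalence for distributions on $\N^K$ with the roles of $\bmu, \tbmu$ swapped gives that $\tbmu$ is $\Gamma$-equivalent to $\bmu$.

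\emph{Transitivity.} This is the only step requiring care, and the main obstacle. Suppose $\bmu$ is $\Gamma$-equivalent to $\tbmu$, and $\tbmu$ is $\Gamma$-equivalent to $\bnu$. The first gives lifts $\bze, \tbze$ of $\bmu, \tbmu$ with $\bze$ $\Gamma$-equivalent to $\tbze$; the second gives lifts $\tbze', \bze''$ of $\tbmu, \bnu$ with $\tbze'$ $\Gamma$-equivalent to $\bze''$. The difficulty is that \emph{a priori} $\tbze \ne \tbze'$: they are two different lifts of the same $\tbmu$, and $\Gamma$-equivalence on $\cW_K$-families is not known to descend to $\N^K$ in a way that lets us chain through. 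To fix this, observe that in fact two lifts of the same family $\tbmu$ \emph{are} $\Gamma$-equivalent to each other: for any lift $\tbze$ of $\tbmu$, the conditioned tree $\tilde{\cT}^{(i)}_{\Gamma,\bg}$ depends, as a $K$-type plane tree, on the types but the conditioning \eqref{eq:gamma} and the law of the underlying projected structure are governed entirely by $\bmu$; more precisely the law of the \emph{type-decorated} tree obtained from any lift of $\tbmu$, conditioned on \eqref{eq:gamma}, is the same — one can see this by a direct computation of the probability of a given $K$-type tree $T$ under a $\tbze$-BGW law, which factors as $\prod_u \tilde{\zeta}^{(\be(u))}(\text{ordered child-type word at } u)$, and summing over all relabellings of children at each vertex shows the law of $T$ up to sibling order depends only on the $\tbmu$-probabilities; since \eqref{eq:gamma} and the notion of the conditioned tree only involve the $N_i$'s, which are invariant under sibling reordering, the conditioned laws agree. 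Hence $\tbze$ and $\tbze'$ are $\Gamma$-equivalent. Now transitivity of $\Gamma$-equivalence on $\cW_K$-families, applied to the chain $\bze \sim \tbze \sim \tbze' \sim \bze''$, yields that $\bze$ is $\Gamma$-equivalent to $\bze''$, and since $\bze, \bze''$ are lifts of $\bmu, \bnu$ we conclude $\bmu$ is $\Gamma$-equivalent to $\bnu$.

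I expect the transitivity step — specifically, the verification that any two lifts of a fixed family on $\N^K$ are themselves $\Gamma$-equivalent, which hinges on the invariance of the conditioned-tree law under sibling permutations together with the fact that the conditioning \eqref{eq:gamma} only sees the type counts $N_i$ — to be the substantive point; the rest is formal.
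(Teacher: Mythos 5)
Your reflexivity and symmetry steps are fine, but the transitivity step rests on a lemma that is false: two lifts of the same family $\tbmu$ need \emph{not} be $\Gamma$-equivalent. Take $K=2$, $\Gamma=(1\;1)$, and $\tilde{\mu}^{(1)}(1,1)=\tilde{\mu}^{(1)}(0,0)=1/2$, $\tilde{\mu}^{(2)}(0,0)=1$. Lift this to $\tbze$ by always placing the type-$1$ child first, $\tilde{\zeta}^{(1)}(1,2)=1/2$, and to $\tbze'$ by always placing it last, $(\tilde{\zeta}')^{(1)}(2,1)=1/2$. Then condition (i) of Definition~\ref{def:gammaequiv} fails, since $\tilde{\zeta}^{(1)}(1,2)>0=(\tilde{\zeta}')^{(1)}(1,2)$, and condition (ii) fails as well: conditioned on $N_1+N_2=3$ with root of type $1$ (an event of probability $1/4$ under both lifts), the $\tbze$-tree is almost surely the plane tree whose root has children ordered $(1,2)$, while the $\tbze'$-tree almost surely has them ordered $(2,1)$; these are distinct $K$-type plane trees, so the conditioned laws differ. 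Your own computation only shows that the conditioned laws agree \emph{up to sibling order}, whereas condition (ii) is an identity of laws of plane trees, so the step ``hence the conditioned laws agree'' does not follow, and the chain $\bze\sim\tbze\sim\tbze'\sim\bze''$ breaks at its middle link (you also never address condition (i) for that link, which is equally problematic).

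The gap can be repaired, and the repair is essentially the paper's proof: parametrize all lifts of a projection $\bmu$ by families of ordering measures $\nu_{i,e}$ on $\cW_K^{(e)}$ via $\zeta^{(i)}(w)=\mu^{(i)}(e)\,\nu_{i,e}(w)$. Since the event \eqref{eq:gamma} is a function of the type counts only, the conditioned plane-tree law factorizes into the law of the conditioned \emph{projected} (unordered) tree, which depends only on $\bmu$, and independent sibling orderings governed by the $\nu_{i,e}$, which are unaffected by the conditioning. Consequently, $\Gamma$-equivalence of \emph{some} pair of lifts of $\bmu,\tbmu$ forces the supports of $\bmu,\tbmu$ and the conditioned projected-tree laws to coincide; and conversely, this projection-level statement gives $\Gamma$-equivalence of the lifts of $\bmu$ and $\tbmu$ built from one \emph{common} family of ordering measures. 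Transitivity then follows by lifting $\bmu$, $\tbmu$ and the third family with a single common ordering family and chaining at the level of $\cW_K$ — rather than by trying to show that the two given lifts $\tbze$ and $\tbze'$ of $\tbmu$ are themselves equivalent, which in general they are not.
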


\begin{proof}
It is clear that any distribution on $\N^K$ is the projection of a distribution on $\cW_K$. More precisely, a distribution $\bmu$ on $\N^K$ being given, we can characterize the distributions $\bze$ on $\cW_K$ whose projection is $\bmu$: $\bze$ has projection $\bmu$ if and only if there exist probability measures $(\nu_{i,e}, i \in [K], e \in \N^K)$ on $\cW_K$ indexed by $[K] \times \N^K$, such that for all $e := (e_1, \ldots, e_K)$, $\nu_{i,e}$ takes its values in $\cW^{(e)}_K := \{w \in \cW_K, (w^{(1)}, \ldots, w^{(K)}) = (e_1, \ldots, e_K) \}$, and, for any $i \in [K]$, for any $w$ in the set $\cW^{(e)}_K$, $\zeta^{(i)}(w) = \mu^{(i)}(e) \times \nu_{i,e}(w)$. In other words, $\bze, \tbze$ are obtained from $\bmu,\tbmu$ by specifying the same ordering of the children of each vertex of the tree. Using this characterization, it becomes clear that the $\Gamma$-equivalence is an equivalence relation.
\end{proof}

The following is then immediate.

\begin{proposition}
\label{prop:distrib=proj}
Let $\Gamma \in \cM^{(K)}(\R)$, and let $\bze$ be a family of distributions on $\cW_K$. Let $\bmu$ be its projection. Then there exists a critical family $\tbze$ that is $\Gamma$-equivalent to $\bze$ if and only if there exists a critical family $\tbmu$ that is $\Gamma$-equivalent to $\bmu$.
\end{proposition}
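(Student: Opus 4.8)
The forward implication is immediate: if $\tbze$ is critical and $\Gamma$-equivalent to $\bze$, then its projection $\tbmu$ is $\Gamma$-equivalent to $\bmu$ (the projection of $\bze$) straight from the definition of $\Gamma$-equivalence for distributions on $\N^K$, and $\tbmu$ is critical because criticality is a property of the mean matrix, hence of the projection only. All the content is in the converse, so assume $\tbmu$ is critical and $\Gamma$-equivalent to $\bmu$. Using the description from the proof of Proposition~\ref{prop:gammaequivalence}, present $\bze$ as the datum of $\bmu$ together with ordering measures $(\nu_{i,e})_{i\in[K],\,e\in\N^K}$, where $\nu_{i,e}$ is a probability measure on the finite set $\cW_K^{(e)}$ of words with count vector $e$ and $\zeta^{(i)}(w)=\mu^{(i)}(p(w))\,\nu_{i,p(w)}(w)$; let $\tbze$ be the family with projection $\tbmu$ and the \emph{same} orderings $(\nu_{i,e})$. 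Summing condition~(i) of Definition~\ref{def:gammaequiv}, applied to any pair witnessing $\bmu\sim_\Gamma\tbmu$, over the words with a given projection shows that $\mu^{(i)}$ and $\tmu^{(i)}$ have the same support for every $i$; hence $\tbze$ is well-defined, condition~(i) holds for $(\bze,\tbze)$, and $\tbze$ is critical, having projection $\tbmu$. It remains to check condition~(ii).

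The key is a factorization of the law of the \emph{outdegree profile} of a $\bze$-BGW tree. For a plane tree $T$, write $n_{j,e}(T)$ for the number of its type-$j$ vertices having exactly $e_k$ children of type $k$ for every $k$, denote by $\mathrm{prof}(T):=(n_{j,e}(T))_{j,e}$ this profile, and let $\be(u)$ denote the type of a vertex $u$. Regrouping the defining product $\prod_{u\in T}\zeta^{(\be(u))}(w_u)$ for $\P(\cT^{(i)}=T)$ according to the type and outdegree profile of each vertex gives, for every profile $\mathfrak n=(n_{j,e})_{j,e}$,
\[
\P\big(\mathrm{prof}(\cT^{(i)})=\mathfrak n\big)\;=\;\Big(\prod_{j,e}\mu^{(j)}(e)^{\,n_{j,e}}\Big)\, c_i(\mathfrak n),\qquad c_i(\mathfrak n):=\sum_{T:\ \mathrm{prof}(T)=\mathfrak n,\ \mathrm{root}\ i}\ \prod_{u\in T}\nu_{\be(u),\,p(w_u)}(w_u).
\]
Crucially, $c_i(\mathfrak n)$ does not depend on the $\nu_{i,e}$: it is a purely combinatorial quantity depending only on $\mathfrak n$ and $i$ (this is where the work lies — see the last paragraph). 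Since $\{\Gamma\bN(\cT^{(i)})=\bg\}$ depends on $\cT^{(i)}$ only through its profile ($N_j=\sum_e n_{j,e}$), for every $\bg$ such that \eqref{eq:gamma} has positive probability and every profile $\mathfrak n$ with $\Gamma\bN(\mathfrak n)=\bg$,
\[
\P\big(\mathrm{prof}(\cT^{(i)})=\mathfrak n\ \big|\ \Gamma\bN(\cT^{(i)})=\bg\big)\;=\;\frac{c_i(\mathfrak n)\,\prod_{j,e}\mu^{(j)}(e)^{\,n_{j,e}}}{\P\big(\Gamma\bN(\cT^{(i)})=\bg\big)}.
\]

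Now run these identities for a pair $(\bze_0,\tbze_0)$ of families on $\cW_K$ with projections $\bmu,\tbmu$ witnessing $\bmu\sim_\Gamma\tbmu$. Summing condition~(ii) for $(\bze_0,\tbze_0)$ over all plane trees with a fixed profile $\mathfrak n$ shows that the two conditional laws above — for $\bmu$ and for $\tbmu$ — agree; since $c_i(\mathfrak n)$ is the same on both sides, this forces the ratio $\prod_{j,e}\mu^{(j)}(e)^{n_{j,e}}\big/\prod_{j,e}\tmu^{(j)}(e)^{n_{j,e}}$ to be constant over all profiles $\mathfrak n$ with $\Gamma\bN(\mathfrak n)=\bg$ occurring with positive probability for $\cT^{(i)}$. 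Feeding this constancy into the conditional laws of the \emph{ordered} trees $\cT^{(i)}_{\Gamma,\bg}$ and $\tilde\cT^{(i)}_{\Gamma,\bg}$, both of the form $\big(\prod_{u}\mu^{(\be(u))}(p(w_u))\big)\big(\prod_{u}\nu_{\be(u),p(w_u)}(w_u)\big)\,\mathbbm{1}[\Gamma\bN(T)=\bg]\big/Z$ (respectively with $\tmu$ in place of $\mu$, but the \emph{same} ordering measures), we see that the $\mu$-parts are proportional and the ordering parts identical; being probability measures on the same finite set of trees, the two laws coincide. Hence $(\bze,\tbze)$ satisfies condition~(ii), so $\bze\sim_\Gamma\tbze$ with $\tbze$ critical, as desired.

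The one genuinely delicate step is the $\nu$-independence of $c_i(\mathfrak n)$. I would get it by parametrizing the plane trees $T$ lying over a fixed unordered $K$-type tree $\bar T$ by a choice of linear order on the children of each vertex of $\bar T$: the total $\nu$-weight of the orders at a vertex with outdegree vector $e$ is $\big(\prod_k e_k!\big)\sum_{w\in\cW_K^{(e)}}\nu_{\cdot,e}(w)=\prod_k e_k!$, which is $\nu$-free, whence $c_i(\mathfrak n)=\sum_{\bar T}|\mathrm{Aut}(\bar T)|^{-1}\prod_v\prod_k e_{v,k}!$, the sum over unordered $K$-type trees $\bar T$ with root type $i$ and profile $\mathfrak n$. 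The subtlety is exactly the automorphism factor $|\mathrm{Aut}(\bar T)|^{-1}$, present because distinct linear orders on $\bar T$ may yield the same plane tree; it has to be tracked carefully but drops out of the conclusion. Everything else is bookkeeping around Definition~\ref{def:gammaequiv} and the projection correspondence of Proposition~\ref{prop:gammaequivalence}.
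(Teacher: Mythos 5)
Your proof is correct and is essentially the approach the paper intends: the paper declares this proposition immediate from the ``same ordering measures'' characterization given in the proof of Proposition~\ref{prop:gammaequivalence}, and your argument simply supplies the verification (equal supports from condition~(i), the outdegree-profile factorization with a $\nu$-free combinatorial factor, and the constant-ratio step feeding back into the ordered conditional laws) that the paper leaves implicit. The one point you only sketch, the $\nu$-independence of $c_i(\mathfrak{n})$, is indeed true and can be obtained without any automorphism bookkeeping by sampling the tree in two stages, first the offspring count vectors according to $\bmu$ and then, independently at each vertex, the ordering of its children according to $\nu_{i,e}$, so nothing essential is missing.
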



\subsection{Good exponential tiltings}

We exhibit here a sufficient criterion for two projections $\bmu, \tbmu$ to be $\Gamma$-equivalent, similar to \cite[Section $4$]{Jan12} in the monotype case. By Proposition \ref{prop:distrib=proj}, finding a critical projection $\tbmu$ that is $\Gamma$-equivalent to a given projection $\bmu$ is the same as finding a critical distribution $\tbze$ that is $\Gamma$-equivalent to a given distribution $\bze$ whose projection is $\bmu$. We emphasize that the criterion that we will use is only a sufficient condition for two projections to be $\Gamma$-equivalent, and does not fully characterize the $\Gamma$-equivalence, contrary to the monotype case. Therefore, our main result only provides a partial answer to the question of whether there exists a critical distribution that is $\Gamma$-equivalent to a given one.

The main concept of this section is the notion of \textit{exponential tiltings} for projections.

\begin{definition}
Let $\bmu := (\mu^{(1)}, \ldots, \mu^{(K)})$, $\tbmu := (\tmu^{(1)}, \ldots, \tmu^{(K)})$ be two families of projections on $\N^K$. We say that $\tbmu$ is an exponential tilting of $\bmu$ if there exist $2K$ constants $a_1, \ldots, a_K, b_1, \ldots, b_K > 0$ such that, for any $\bk:=(k_1, \ldots, k_K) \in \N^K$, any $i \in [K]$:
\begin{align*}
\tmui(\bk) := a_i \prod_{j=1}^K b_j^{k_j} \mui(\bk).
\end{align*}

Equivalently, for all $i \in [K]$, all $s_1, \ldots, s_K \in [0,1]^K$:
\begin{align*}
\tphii(s_1, \ldots, s_K) = a_i \phii(b_1s_1, \ldots, b_Ks_K),
\end{align*}
where we denote by $\tphii$ the generating function of $\tmui$ for $i \in [K]$.
\end{definition}

It is clear that, if $\tbmu$ is an exponential tilting of $\bmu$, then $\bmu$ is an exponential tilting of $\tbmu$, and that $\bmu$ is entire (resp. irreducible) if and only if $\tbmu$ is entire (resp. irreducible). Furthermore, the fact that $\tmui$ is a probability distribution for all $i \in [K]$ implies that $a_1, \ldots, a_K, b_1, \ldots, b_K$ shall satisfy

\begin{equation}
\label{eq:cases}
\begin{cases} 
 \tilde{\phi}^{(1)}(1, \ldots, 1) = 1 \\ 
 \tilde{\phi}^{(2)}(1, \ldots, 1) = 1 \\
 \qquad \qquad \vdots \\
 \tilde{\phi}^{(k)}(1, \ldots, 1) = 1,  
 \end{cases}
 \end{equation}

\noindent which is equivalent to $a_i^{-1} = \phi^{(i)}(b_1, \ldots, b_K)$ for all $i \in [K]$. In other words, specifying the $(b_i, i \in [K])$ forces the values of the $(a_i, i \in [K])$.

Our first result characterizes a family of exponential tiltings that preserve the distribution of the conditioned multitype trees. 

\begin{definition}
Let $\Gamma \in \cM^{(K)}(\R)$. We say that $(a_i, b_i)_{i \in [K]}$ satisfying \eqref{eq:cases} is a \textit{good exponential tilting} if $\bmu$ and $\tbmu$ are $\Gamma$-equivalent, where $\tbmu$ is the exponential tilting of $\bmu$ obtained from $(a_i, b_i)_{i \in [K]}$.
\end{definition}

The interest of this definition lies in the following result.

\begin{proposition}
\label{prop:goodexptilt}
Let $\Gamma \in \cM^{(K)}(\R)$ and $\{ (a_i, b_i), i \in [K] \} \in \left((\R_+^*)^2\right)^K$ satisfying \eqref{eq:cases}.
Define, for all $i \in [K]$, $c_i := \log(a_i b_i)$. Then, if $$\bc := (c_i, i \in [K]) \in (Ker \, \Gamma)^{\perp},$$ 
we have that $\{ (a_i, b_i), i \in [K] \}$ is a good exponential tilting.
\end{proposition}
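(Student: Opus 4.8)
The plan is to compute the probability that a $\tbze$-BGW tree equals a given finite $K$-type tree $T$ conditioned on \eqref{eq:gamma}, and show it does not depend on the tilting as long as $\bc \in (\mathrm{Ker}\,\Gamma)^\perp$. First I would fix a finite $K$-type plane tree $T = (t, \be_t)$ with $N_i := N_i(T)$ for $i \in [K]$, and root type $i$. Writing $\bze$ and $\tbze$ for the families on $\cW_K$ obtained from $\bmu, \tbmu$ using the \emph{same} orderings $(\nu_{i,e})$ as in the proof of Proposition~\ref{prop:gammaequivalence} (this is where condition (i) of Definition~\ref{def:gammaequiv} is automatic, since $\zeta^{(i)}(w) = 0 \iff \mu^{(i)}(p(w)) = 0 \iff \tmu^{(i)}(p(w)) = 0 \iff \tilde\zeta^{(i)}(w) = 0$), the unconditioned probability is
\begin{align*}
\P(\tilde\cT^{(i)} = T) = \prod_{u \in t} \tilde\zeta^{(\be_t(u))}(X_u) = \Big(\prod_{u \in t} \zeta^{(\be_t(u))}(X_u)\Big) \cdot \prod_{u \in t} a_{\be_t(u)} \prod_{j=1}^K b_j^{(X_u)^{(j)}},
\end{align*}
where $X_u$ records the ordered children types of $u$. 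The key bookkeeping step is to reorganize the product $\prod_{u \in t} a_{\be_t(u)} \prod_{j} b_j^{(X_u)^{(j)}}$ over vertices: the factor $a_{\be_t(u)}$ contributes $\prod_{j} a_j^{N_j}$ in total, while $\prod_{u}\prod_{j} b_j^{(X_u)^{(j)}}$ counts, for each $j$, the total number of vertices of type $j$ appearing as a child of some vertex, which is exactly $N_j$ when the root has type $i$ and $N_j - \mathbbm{1}_{j=i}$ otherwise — equivalently $\prod_j b_j^{N_j - \mathbbm{1}_{j=i}}$. Hence
\begin{align*}
\P(\tilde\cT^{(i)} = T) = \P(\cT^{(i)} = T) \cdot b_i^{-1} \prod_{j=1}^K (a_j b_j)^{N_j} = \P(\cT^{(i)} = T) \cdot b_i^{-1} \exp\Big( \sum_{j=1}^K c_j N_j \Big),
\end{align*}
with $c_j = \log(a_j b_j)$; note $b_i^{-1}$ depends only on the root type, not on $T$.

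Next I would invoke the hypothesis $\bc \in (\mathrm{Ker}\,\Gamma)^\perp = \mathrm{Im}(\Gamma^\top)$: write $\bc = \Gamma^\top \blambda$ for some $\blambda \in \R^L$, so that for any $T$ satisfying \eqref{eq:gamma} with right-hand side $\bg$ we get
\begin{align*}
\sum_{j=1}^K c_j N_j = \langle \bc, \bN(T) \rangle = \langle \Gamma^\top \blambda, \bN(T) \rangle = \langle \blambda, \Gamma \bN(T) \rangle = \langle \blambda, \bg \rangle,
\end{align*}
which is a constant depending only on $\bg$ (and $\blambda$), not on the particular tree $T$. Therefore, on the event \eqref{eq:gamma}, $\P(\tilde\cT^{(i)} = T) = \P(\cT^{(i)} = T) \cdot b_i^{-1} e^{\langle \blambda, \bg\rangle}$, i.e.\ the two unconditioned laws restricted to $\{ \Gamma \bN = \bg\}$ are proportional by a $T$-independent constant. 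Consequently their normalizations over this (at most countable) set of trees agree, and for each such $T$,
\begin{align*}
\P\big(\tilde\cT^{(i)} = T \,\big|\, \Gamma \bN(\tilde\cT^{(i)}) = \bg\big) = \frac{\P(\tilde\cT^{(i)} = T)}{\sum_{T'} \P(\tilde\cT^{(i)} = T')} = \frac{\P(\cT^{(i)} = T)}{\sum_{T'} \P(\cT^{(i)} = T')} = \P\big(\cT^{(i)} = T \,\big|\, \Gamma \bN(\cT^{(i)}) = \bg\big),
\end{align*}
where both sums range over finite $K$-type trees $T'$ with root type $i$ satisfying \eqref{eq:gamma}. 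Since condition (i) of Definition~\ref{def:gammaequiv} was verified above, this gives $\cT^{(i)}_{\Gamma,\bg} \overset{(d)}{=} \tilde\cT^{(i)}_{\Gamma,\bg}$ for every admissible $\bg$ and every $i$, which is precisely the statement that $(a_i,b_i)_{i\in[K]}$ is a good exponential tilting.

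Two points deserve care. First, one must check that the conditioning event $\{\Gamma \bN(\tilde\cT^{(i)}) = \bg\}$ has positive probability for $\tbze$ exactly when it does for $\bze$; this follows from the proportionality relation above (the supporting sets of finite trees coincide since $\P(\tilde\cT^{(i)}=T) > 0 \iff \P(\cT^{(i)}=T)>0$), so no separate argument is needed. Second, the computation implicitly assumes the conditioned tree is almost surely finite, or at least that the conditional law is supported on finite trees — this is fine because \eqref{eq:gamma} with finite $\bg$ and $\ga \in (\mathrm{Ker}\,\Gamma)^\perp \cap (\N^*)^K$ forces $\langle \ga, \bN(T)\rangle$ to be a fixed finite number, hence $\bN(T)$ bounded, hence $|t|$ finite; but in fact condition \ref{condition} is a hypothesis of Theorem~\ref{thm:main} rather than of this Proposition, so here one should instead simply note that the identity $\P(\tilde\cT^{(i)}=T) = \P(\cT^{(i)}=T)\, b_i^{-1} e^{\langle\blambda,\bg\rangle}$ holds for every finite $T$ on the event, and restrict attention to $\bg$ for which \eqref{eq:gamma} is realized with positive probability by finite trees (as in Definition~\ref{def:gammaequiv}(ii)). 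The only genuine subtlety — and the step I would be most careful writing out — is the vertex-counting identity showing $\prod_{u\in t}\prod_j b_j^{(X_u)^{(j)}} = \prod_j b_j^{N_j - \mathbbm{1}_{j=i}}$, i.e.\ that summing child-type multiplicities over all vertices recovers the global type counts minus the root; everything else is then linear algebra and normalization.
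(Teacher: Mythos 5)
Your proposal is correct and follows essentially the same route as the paper: you compute that the tilted law of a finite tree $T$ with root type $i$ equals the original law times $b_i^{-1}\prod_{j}(a_jb_j)^{N_j(T)}$, observe that $\langle \bc, \bN(T)\rangle$ is constant on each fiber $\{\Gamma\bN(T)=\bg\}$, and conclude that the conditioned laws (and normalizing constants) coincide, exactly as in the paper's proof. The only (harmless) differences are that you deduce the constancy directly from $\bc\in\mathrm{Im}(\Gamma^{\top})$, i.e.\ the easy inclusion $(\mathrm{Ker}\,\Gamma)^{\perp}\subseteq S_\Gamma$, whereas the paper proves the full identity $S_\Gamma=(\mathrm{Ker}\,\Gamma)^{\perp}$ via an integrality argument, and that your asserted equivalence ``$\zeta^{(i)}(w)=0 \Leftrightarrow \mu^{(i)}(p(w))=0$'' is really only a one-sided implication --- but since both lifts use the same ordering measures $\nu_{i,e}$ and the tilting constants are strictly positive, the zero sets of $\zeta^{(i)}$ and $\tilde{\zeta}^{(i)}$ do coincide, so condition (i) of Definition \ref{def:gammaequiv} holds as you claim.
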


\begin{remark}
Observe that it is not an equivalence, as there may be good exponential tiltings that do not satisfy $\bc \in (Ker \, \Gamma)^{\perp}$.
\end{remark}

Proposition \ref{prop:goodexptilt} makes clear the dependency in $\Gamma$ of the notion of good exponential tilting: different matrices $\Gamma$ clearly provide different notions of good exponential tiltings.

As a corollary of Proposition \ref{prop:goodexptilt}, we have $rk(\Gamma)$ degrees of freedom in the choice of a good tilting. In particular this is minimum when $rk(\Gamma)=1$, in which case we can restrict ourselves to conditionings of the form
\begin{align*}
\sum_{i=1}^K \gamma_i N_i(\cT) = Q
\end{align*}
for some constant $Q \in \R$, that is (by \ref{condition}), $\Gamma \in \cM_{1,K}(\N^*)$. As an example, Pénisson \cite{Pen14} and Abraham-Delmas-Guo \cite{ADG18} consider the case $\Gamma=Id_K$. The existence of a critical projection $\Gamma$-equivalent to $\bmu$ can (under our assumptions) be deduced from the same result for $\Gamma=(1 0 \ldots 0) \in \cM_{1,K}$.

We now prove Proposition \ref{prop:goodexptilt}.

\begin{proof}[Proof of Proposition \ref{prop:goodexptilt}]
Fix $L \geq 1$ and $\Gamma \in \cM_{L,K}(\R)$. Let $j \in [K]$ and $\bg = (g_1, \ldots, g_L)$ such that $\P (\cT^{(j)} \text{ satisfies } \eqref{eq:gamma})>0$. Let $\bT^{(j)}_{\Gamma,\bg}$ be the set of trees $T$ with root label $j$ satisfying
\begin{equation}
\label{eq:GammaT}
\Gamma \begin{pmatrix}
N_1(T)\\
\vdots \\
N_K(T)
\end{pmatrix}
= 
\begin{pmatrix}
g_1\\
\vdots \\
g_L
\end{pmatrix}.
\end{equation}

For a tree $T$, a vertex $v \in T$ and $i \in [K]$, let $k_v^{(i)}(T)$ be the number of children of $v$ in $T$ with label $i$. For all $T \in \bT^{(j)}_{\Gamma,\bg}$, we have that
\begin{align*}
\P\left( \cT^{(j)}_{\Gamma,\bg} = T \right) = \frac{w(T)}{Z_{\Gamma,\bg}}, 
\end{align*}
where 
\begin{align*}
w(T) = \prod_{i \in [K]}\prod_{v \in T, \ell(v)=i} \mu^{(i)}\left(k_v^{(1)}(T), \ldots, k_v^{(K)}(T)\right)
\end{align*}
and
\begin{align*}
Z_{\Gamma,\bg} = \sum_{U \in \bT^{(j)}_{\Gamma, \bg}} w(U).
\end{align*}

On the other hand, we have

\begin{align*}
\P\left( \tilde{\cT}^{(j)}_{\Gamma,\bg} = T \right) = \frac{\tilde{w}(T)}{\tilde{Z}_{\Gamma,\bg}}, 
\end{align*}
where 
\begin{align*}
\tilde{w}(T) &= \prod_{i \in [K]}\prod_{v \in T, \ell(v)=i} a_i \mu^{(i)}\left(k_v^{(1)}(T), \ldots, k_v^{(K)}(T)\right) \prod_{r=1}^K b_r^{k_v^{(r)}(T)}\\
&=\prod_{i \in [K]} a_i^{N_i(T)} b_i^{N_i(T)} b_j^{-1} w(T)
\end{align*}
(the factor $b_j^{-1}$ corresponds to the root label), and
\begin{align*}
\tilde{Z}_{\Gamma,\bg} = \sum_{U \in \bT^{(j)}_{\Gamma, \bg}} \tilde{w}(U).
\end{align*}

Hence, $\cT^{(j)}_{\Gamma,\bg} \overset{(d)}{=} \tilde{\cT}^{(j)}_{\Gamma,\bg}$ if and only if $\prod_{i \in [K]} a_i^{N_i(T)} b_i^{N_i(T)}$ is constant on $\bT_{\Gamma,\bg}^{(j)}$. This is equivalent to
\begin{align*}
\langle \bc, \bN(T) \rangle \text{ is constant,}
\end{align*}
where $\langle \cdot, \cdot \rangle$ is the usual scalar product on $\R^K$, $\bc = (c_1, \ldots, c_K)$ and $\bN(T)=(N_1(T), \ldots, N_K(T))$.

In particular, $\{ (a_i, b_i), i \in [K] \}$ is a good exponential tilting if $\bc \in S_{\Gamma}$, where

\begin{align*}
S_\Gamma := \left\{ \bc \in \R^K, \forall \bx, \by \in \N^K, \Gamma \bx = \Gamma \by \Rightarrow \langle \bc, \bx \rangle = \langle \bc, \by \rangle  \right\}.
\end{align*}

Since $\N-\N=\Z$, it is clear that
\begin{align*}
S_\Gamma &= \left\{ \bc \in \R^K, \forall \bx \in \Q^K, \Gamma \bx = 0 \Rightarrow \langle \bc, \bx \rangle = 0 \right\}\\
&= \left\{ \bc \in \R^K, Ker \, \Gamma \cap \Q^K \subseteq Ker F_\bc \right\},
\end{align*}
where $F_\bc \in (\R^K)^*: \bx \mapsto \langle \bc, \bx \rangle$ is the linear form associated to $\bc$.

Using the fact that $dim_\Q (Ker \, \Gamma) = dim_\R (Ker \, \Gamma)$, we get that

\begin{align*}
S_\Gamma &= \left\{ \bc \in \R^K, Ker \, \Gamma \subseteq Ker F_\bc \right\}\\
&= (Ker \, \Gamma)^{\perp}.
\end{align*}
\end{proof}

In particular, $S_\Gamma$ is a vector space of dimension $dim \, S_\Gamma = rk(\Gamma)$.

\section{Existence of a critical exponential tilting}
\label{sec:proofs}

We prove here the first part of our main theorem, Theorem \ref{thm:main}, stating the existence of a critical exponential tilting of any offspring entire distribution, under assumptions \ref{cond:entire}-\ref{cond:escape} and \ref{condition}. To this end, by Proposition \ref{prop:goodexptilt}, we can restrict ourselves to the case where $(Ker \, \Gamma)^\perp = \R \ga$ for some $\ga$ satisfying \ref{condition}. Without loss of generality, we can assume that $\gamma_1=1=\min\{ \gamma_i, i \in [K] \}$. In other words, without loss of generality, $\Gamma$ is of the form
\begin{align*}
\Gamma = (1 \, \gamma_2 \, \ldots \gamma_K) \in (\N^*)^K.
\end{align*}

\subsection{The setting}
\label{ssec:setting}

We fix the type of the root of our trees (say, $j \in [K]$) and condition our trees on their total weighted number of vertices $N_1(\cT^{(j)}) + \sum_{i=2}^K \ga_i N_i(\cT^{(j)})$. Hence, we have, with the notation of Section \ref{sec:tiltings}:
\begin{align*}
S_\Gamma = \R \begin{pmatrix}
1\\
\gamma_2\\
\vdots \\
\gamma_K
\end{pmatrix}.
\end{align*}

Take $\bc \in S_\Gamma$, and set $\beta=\exp(c_1)$. By assumption, $c_i/\gamma_i=\log(\beta)$ for all $i \in [K]$. The system \eqref{eq:cases} becomes

\begin{equation}
\label{eq:casesbis}
\begin{cases} 
 \beta \frac{\phi^{(1)}(b_1, \ldots, b_K)}{b_1} = 1 \\ 
 \beta \left( \frac{\phi^{(2)}(b_1, \ldots, b_K)}{b_2} \right)^{1/\gamma_2} = 1 \\
 \qquad \qquad \vdots \\
 \beta \left(\frac{\phi^{(K)}(b_1, \ldots, b_K)}{b_K}\right)^{1/\gamma_K} = 1. \\
 \end{cases}
\end{equation}


\subsection{The tilted mean matrix.}

We start by connecting the spectral radius of the tilted mean matrix to the original one. For any $\bb := (b_1, \ldots, b_K) \in (0,+\infty)^K$, we denote by $\tilde{\rho}(\bb)$ the spectral radius of the mean matrix $\tilde{M}$ of the tilted projection associated to $\{ (a_i, b_i), i \in [K] \}$ (recall that, by definition, the $a_i$'s are uniquely defined by the $b_i$'s).

\begin{lemma}
\label{lem:specrad}
For any $\bb := (b_1, \ldots, b_k)$ satisfying \eqref{eq:casesbis}, the spectral radius $\tilde{\rho}(\bb)$ of $\tilde{M}$ satisfies
\begin{align*}
\tilde{\rho}(\bb) = \rho(M'),
\end{align*}
where $M'=\left( \beta^{\gamma_i} \frac{\partial \phi^{(i)}}{\partial x_j} \left( \bb \right) \right)_{1 \leq i,j \leq K}$ and $\rho(M)$ stands for the spectral radius of a matrix $M$.
\end{lemma}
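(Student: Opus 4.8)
The plan is to compute the mean matrix $\tilde M$ of the tilted projection explicitly in terms of the original generating functions and the tilting constants, and then to show that $\tilde M$ is \emph{diagonally similar} (conjugate by a diagonal matrix) to $M'$, so that the two matrices share the same spectrum and in particular the same spectral radius. Diagonal similarity is the natural mechanism here because exponential tilting rescales the $j$-th coordinate by $b_j$, which affects the $(i,j)$ entry of a mean matrix by a factor $b_j$, while the root-type normalization by $a_i$ contributes a factor depending only on the row index $i$; both effects can be absorbed into a conjugation $\tilde M = D^{-1} M' D$ for a suitable diagonal $D$.

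First I would write down $\tilde m_{i,j}$. By definition of the tilted projection, $\tmui(\bk) = a_i \prod_r b_r^{k_r} \mui(\bk)$, so
\begin{align*}
\tilde m_{i,j} = \sum_{\bk \in \N^K} k_j \, \tmui(\bk) = a_i \sum_{\bk} k_j \Big(\prod_{r} b_r^{k_r}\Big)\mui(\bk) = a_i \, b_j \, \frac{\partial \phi^{(i)}}{\partial x_j}(b_1,\ldots,b_K),
\end{align*}
where the last equality is just the identity $\partial_{x_j}\phi^{(i)}(\bx) = \sum_{\bk} k_j x_j^{k_j-1}\prod_{r\neq j}x_r^{k_r}\mui(\bk)$ evaluated at $\bx = \bb$, multiplied by $b_j$; this is legitimate since $\bmu$ is entire (Assumption \ref{cond:entire}), so the power series converges and can be differentiated termwise at any positive point. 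Next I would use the constraint \eqref{eq:casesbis}: the $i$-th line reads $\beta\,(\phi^{(i)}(\bb)/b_i)^{1/\gamma_i} = 1$, i.e. $\phi^{(i)}(\bb) = \beta^{-\gamma_i} b_i$. Since $a_i^{-1} = \phi^{(i)}(\bb)$ (the normalization forcing $\tphii(1,\ldots,1)=1$), we get $a_i = \beta^{\gamma_i}/b_i$. Substituting,
\begin{align*}
\tilde m_{i,j} = \frac{\beta^{\gamma_i}}{b_i}\, b_j \, \frac{\partial \phi^{(i)}}{\partial x_j}(\bb) = \frac{b_j}{b_i}\, M'_{i,j},
\end{align*}
with $M'_{i,j} = \beta^{\gamma_i}\partial_{x_j}\phi^{(i)}(\bb)$ as in the statement. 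Hence $\tilde M = D^{-1} M' D$ where $D = \operatorname{diag}(b_1,\ldots,b_K)$, so $\tilde M$ and $M'$ have identical characteristic polynomials and therefore the same spectral radius: $\tilde\rho(\bb) = \rho(M')$.

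I do not expect a serious obstacle here: the only points requiring a word of care are the termwise differentiation of the generating function at a positive real point $\bb$ (covered by entireness, Assumption \ref{cond:entire}, which guarantees the series for $\phi^{(i)}$ and all its derivatives converge on all of $\R^K_{\ge 0}$), and the correct bookkeeping of the constants $a_i$ via \eqref{eq:casesbis} — in particular that the $i$-th equation of \eqref{eq:casesbis} pins down $a_i = \beta^{\gamma_i}/b_i$ rather than something involving other indices. Once those are in place the result is the elementary fact that diagonally similar matrices are cospectral. If one wanted to avoid even mentioning diagonal similarity, an equally short route is to observe that $M'$ and $\tilde M$ have the same entries up to the positive factors $b_j/b_i$, apply Perron--Frobenius (using irreducibility, which is preserved under tilting), and note that if $\bv$ is a positive right eigenvector of $M'$ for $\rho(M')$ then $D^{-1}\bv$ is a positive right eigenvector of $\tilde M$ with the same eigenvalue — but the characteristic-polynomial argument is cleaner and needs no irreducibility assumption.
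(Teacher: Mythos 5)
Your proposal is correct and follows essentially the same route as the paper: compute $\tilde{M}_{i,j} = a_i b_j \,\partial_{x_j}\phi^{(i)}(\bb) = \frac{b_j}{b_i}M'_{i,j}$ using $a_i = \beta^{\gamma_i}/b_i$ from \eqref{eq:casesbis}, and conclude by diagonal similarity $\tilde{M} = P^{-1}M'P$ with $P=\operatorname{diag}(b_1,\ldots,b_K)$, exactly as in the paper's proof of Lemma \ref{lem:specrad}.
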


\begin{proof}[Proof of Lemma \ref{lem:specrad}]
It is clear that, for all $1 \leq i,j \leq k$:
\begin{align*}
\tilde{M}_{i,j} &= \frac{\partial \tilde{\phi}^{(i)}}{\partial x_j} (1, \ldots, 1)\\ 
&= \frac{\beta^{\gamma_i}}{b_i} b_j \frac{\partial \phi^{(i)}}{\partial x_j} (\bb) = \frac{b_j}{b_i} M'_{i,j}.
\end{align*}
In particular, we have
\begin{align*}
\tilde{M} = P^{-1}M'P,
\end{align*}
where $P=diag(b_1, \ldots, b_K)$ is the diagonal matrix with $P_{i,i}=b_i$ for all $i \in [K]$. Since $\tilde{M}$ and $M'$ are similar, they have the same eigenvalues, and thus the same spectral radius.
\end{proof}

\subsection{Proof of the main result.}

We now turn to the proof of the first part of Theorem \ref{thm:main}. Let us explain the strategy of the proof. We consider the set $A_+ := \{ \bm{0} \} \cup \{ \bb \in (0,+\infty)^K, \eqref{eq:casesbis} \text{ holds for some } \beta > 0 \}$. We prove that, in a neighbourghood of $\bm{0}$ in $(\R+)^K$, there exists a nontrivial continuous simple curve $\cC$ containing $\bm{0}$ such that $\cC \subseteq A_+$. Furthermore, $\tilde{\rho}(\bb)$ goes to $0$ as $\bb \in A_+ \setminus \{ \bm{0} \}$ goes to $\bm{0}$. Then, the idea is roughly speaking to follow this curve $\cC$ starting from $\bm{0}$ and prove that it contains a point $\bb \in \R_+^K$ at which $\tilde{\rho}(\bb)=1$.

\subsubsection{Around the origin}

Our first goal is to study the set $A_+$ in a neighbourhood of $\bm{0}$. To this end, we introduce for all $1 \leq i, j \leq K$:

\begin{align*}
G_{i,j}: (b_1, \ldots, b_K) \mapsto b_j^{\gamma_i} \left(\phi^{(i)}(b_1, \ldots, b_K)\right)^{\gamma_j} - b_i^{\gamma_j} \left(\phi^{(j)} (b_1, \ldots, b_K)\right)^{\gamma_i}.
\end{align*}

\noindent In particular, $G_{i,j} = -G_{j,i}$ for all $i,j \in [K]$.
Since we have assumed that the $\phi^{(i)}$'s are all entire (Assumption \ref{cond:entire}), for all $i,j \in [K]$, $G_{i,j}$ can be extended on all $\R^K$. Clearly all these functions are holomorphic (since $\gamma_i \in \N^*$ for all $i \in [K]$ by \ref{condition}), and 
\begin{align*}
A_+ = \left( \{ \bm{0} \} \cup (0,\infty)^K \right) \cap \bigcap_{1 \leq i < j \leq K} \left\{ G_{i,j}^{-1}(0) \right\}.
\end{align*}

Our first result is the existence of the curve $\cC$ mentioned above. In other words, close to $\bm{0}$, $A_+$ is the graph of a function. 
It is useful to define the extension of $A_+$ to $\R^K$, and set

\begin{align*}
A := \R^K \cap \bigcap_{i,j \in [K]} G_{i,j}^{-1}(\{0\}).
\end{align*} 

\begin{theorem}
\label{thm:aroundzero}
There exists a function $\psi: \R \rightarrow \R^{K-1}$ defined on an open neighbourhood $V$ of $\bm{0}$ in $\R^K$ such that, for $(b_1, \ldots, b_K) \in V$, 
\begin{align*}
(b_1, \ldots, b_K) \in A \Leftrightarrow  (b_2, \ldots, b_K) = \psi(b_1).
\end{align*}

\noindent Furthermore, we have for all $2 \leq j \leq K$:
$\psi_j^{[s]}(\bm{0})=0$ for $s \in \{ 0, \ldots, \gamma_j-1 \}$ and
\begin{align*}
\psi_j^{[\gamma_j]}(\bm{0}) = \gamma_j! \left(\phi^{(1)}(\bm{0})\right)^{-\gamma_j} \phi^{(j)}(\bm{0}) 
\end{align*}
where $f^{[s]}$ denotes the $s$-th derivative of $f$ and $\psi_j$ is the $(j-1)$-st coordinate of $\psi$. In particular, $\psi_j^{[\gamma_j]}>0$.
\end{theorem}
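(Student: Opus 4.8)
The plan is to prove Theorem~\ref{thm:aroundzero} by an implicit-function-theorem argument applied not to the $G_{i,j}$ directly (which vanish to high order at $\bm{0}$), but to suitably desingularized versions of them. First I would observe that, because $\zeta^{(i)}(\emptyset)>0$ for all $i$ (Assumption~\ref{cond:jamaiszero}), we have $\phi^{(i)}(\bm{0})>0$ for every $i$, so near $\bm{0}$ all the factors $\phi^{(i)}(\bb)^{\gamma_j}$ are positive and bounded away from $0$. The key reduction is that the system $G_{i,j}=0$ for all $i<j$ is, near $\bm{0}$, equivalent to the $K-1$ equations $G_{1,j}=0$, $j=2,\dots,K$, since $G_{i,j}$ can be written as a combination of $G_{1,i}$ and $G_{1,j}$ divided by positive factors (using $b_j^{\gamma_i}\phi^{(i)\gamma_j}-b_i^{\gamma_j}\phi^{(j)\gamma_i}$ and the corresponding expressions with index $1$). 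So it suffices to understand the common zero set of $G_{1,2},\dots,G_{1,K}$ near $\bm{0}$.

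Next I would desingularize each $G_{1,j}$. Write $G_{1,j}(\bb)=b_j^{\gamma_1}\phi^{(1)}(\bb)^{\gamma_j}-b_1^{\gamma_j}\phi^{(j)}(\bb)^{\gamma_1}$; recall $\gamma_1=1$, so this is $b_j\,\phi^{(1)}(\bb)^{\gamma_j}-b_1^{\gamma_j}\phi^{(j)}(\bb)$. The idea is that along $A$ near $\bm{0}$ one expects $b_j\sim c_j b_1^{\gamma_j}$; so I would introduce new coordinates, e.g. $u_j := b_j$ for the would-be graph variables and treat $b_1$ as the free parameter, and consider
\begin{align*}
H_j(b_1,b_2,\dots,b_K) := \frac{G_{1,j}(b_1,\dots,b_K)}{\phi^{(1)}(\bb)^{\gamma_j}} = b_j - b_1^{\gamma_j}\,\frac{\phi^{(j)}(\bb)}{\phi^{(1)}(\bb)^{\gamma_j}},
\end{align*}
which is still holomorphic near $\bm{0}$ (denominator nonzero) and has the same zero set as $G_{1,j}$ there. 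Now $\partial H_j/\partial b_k (\bm{0}) = \delta_{jk}$ for $j,k\in\{2,\dots,K\}$ (the second term contributes nothing to first order at $\bm{0}$ since it is $O(b_1^{\gamma_j})$ with $\gamma_j\ge 1$, and if some $\gamma_j=1$ the term $b_1\phi^{(j)}(\bm{0})/\phi^{(1)}(\bm{0})$ still has zero $\partial/\partial b_k$ at the origin for $k\ge 2$). Hence the Jacobian of $(H_2,\dots,H_K)$ with respect to $(b_2,\dots,b_K)$ at $\bm{0}$ is the identity, and the holomorphic (a fortiori real-analytic) implicit function theorem yields a holomorphic map $\psi=(\psi_2,\dots,\psi_K)$ defined on a neighbourhood $V$ of $\bm{0}$ with $(b_2,\dots,b_K)=\psi(b_1)$ characterizing $A\cap V$. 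This gives the first assertion, and analyticity of $\psi$.

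To get the derivative information I would substitute $b_j=\psi_j(b_1)$ into $H_j\equiv 0$, i.e. into
\begin{align*}
\psi_j(b_1) = b_1^{\gamma_j}\,\frac{\phi^{(j)}(b_1,\psi_2(b_1),\dots,\psi_K(b_1))}{\phi^{(1)}(b_1,\psi_2(b_1),\dots,\psi_K(b_1))^{\gamma_j}},
\end{align*}
and argue by induction on $j$ (or simultaneously) that $\psi_j$ vanishes to order exactly $\gamma_j$ at $0$: since the right-hand side carries an explicit factor $b_1^{\gamma_j}$ and the remaining quotient is analytic with value $\phi^{(j)}(\bm{0})\phi^{(1)}(\bm{0})^{-\gamma_j}$ at $b_1=0$ (using that each $\psi_k(0)=0$), Taylor expansion gives $\psi_j^{[s]}(\bm{0})=0$ for $s<\gamma_j$ and $\psi_j^{[\gamma_j]}(\bm{0})=\gamma_j!\,\phi^{(j)}(\bm{0})\phi^{(1)}(\bm{0})^{-\gamma_j}$, which is positive because all the $\phi^{(i)}(\bm{0})=\zeta^{(i)}(\emptyset)>0$. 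One must check the induction is consistent: evaluating the quotient and its low-order derivatives at $0$ only ever requires the values $\psi_k(0)=0$, not the higher derivatives, so there is no circularity.

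The main obstacle I expect is purely bookkeeping rather than conceptual: verifying that the passage from $\{G_{i,j}=0\}_{i<j}$ to $\{G_{1,j}=0\}_j$ is valid on a full neighbourhood of $\bm{0}$ (one needs the algebraic identity relating $G_{i,j}$ to $G_{1,i},G_{1,j}$ together with nonvanishing of the relevant $\phi$-powers and care with the $b$-power factors), and confirming that the implicit function theorem applies — i.e. that the Jacobian is really the identity — which hinges on the observation that $\gamma_j\ge 1$ for all $j$ makes every non-diagonal correction term vanish to first order at the origin. Everything else is routine Taylor expansion once the coordinates $H_j$ are set up.
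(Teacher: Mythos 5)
Your proposal is correct and takes essentially the same route as the paper: apply the implicit function theorem at the origin to the system $G_{1,2}=\dots=G_{1,K}=0$, whose partial Jacobian in $(b_2,\ldots,b_K)$ is diagonal and invertible because $\phi^{(1)}(\bm{0})>0$ (Assumption \ref{cond:jamaiszero}) and $\gamma_1=1$, and then extract the stated derivative values by Taylor expansion. Your normalization $H_j=G_{1,j}/\left(\phi^{(1)}\right)^{\gamma_j}$, which makes the Jacobian the identity and the $\gamma_j$-th derivative computation a one-line Leibniz expansion, and your explicit verification that the remaining equations $G_{i,j}=0$ ($i,j\geq 2$) follow from $G_{1,j}=0$ near $\bm{0}$, are only minor refinements of steps the paper carries out via the chain rule or leaves implicit.
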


This ensures that the connected component of $A$ containing $\bm{0}$ is a simple curve around $\bm{0}$ and that, in a neighbourhood $V$ of $\bm{0}$ in $\R^K$, for any $(b_1, \ldots, b_K) \in A \cap V$, we have $(b_1, \ldots, b_K)=\bm{0}$ or all $b_i$'s have the same sign.

\begin{proof}[Proof of Theorem \ref{thm:aroundzero}]
We apply the implicit function theorem to the function
\begin{align*}
G: (b_1, \ldots, b_K) \in \R^K \mapsto \left( G_{1,2}(b_1,\ldots, b_K), \ldots, G_{1, K}(b_1, \ldots, b_K) \right) \in \R^{K-1}.
\end{align*}

This function is clearly $C^{\infty}$ on $\R^K$. Observe that $G(\bm{0})=\bm{0}$. Furthermore, we have for all $2 \leq j,j' \leq K$:

$$
\frac{\partial G_{1,j}}{\partial b_{j'}}(\bm{0}) = \left\{
    \begin{array}{ll}
        \left(\phi^{(1)}(\bm{0})\right)^{\gamma_j} & \mbox{if } j=j' \\
        0 & \mbox{otherwise.}
    \end{array}
\right.
$$

In particular, by \ref{cond:jamaiszero}, $\phi^{(1)}(\bm{0})>0$ and the Jacobian matrix at $\bm{0}$ is diagonal and invertible. The first part of the result follows by the implicit function theorem. 
The second part follows directly from the chain rule and the computation of $\frac{\partial^s G_{1,j}}{\partial b_1^s}$ for $1 \leq s \leq \gamma_j$.
\end{proof}

Observe that this proof is based on Assumption \ref{condition} and the fact that $\gamma_1=1$. From now on, we denote by $\cC \subseteq \R_+^K$ the connected component of $A_+$ containing $\bm{0}$. By Theorem \ref{thm:aroundzero}, $\cC \neq \{ \bm{0} \}$. It is interesting to notice that, in general, the set $A_+$ is not connected. We now prove that, for $(b_1, \ldots, b_K) \in \cC$ close enough to $\bm{0}$, the associated tilted spectral radius is at most $1$.

\begin{lemma}
\label{lem:rho<1}
There exists $r>0$ such that, for all $\bb := (b_1, \ldots, b_K) \in A_+ \setminus \{ \bm{0} \}$ such that $\sup_{i \in [K]} b_i \leq r$, $\tilde{\rho}(\bb) < 1$.
\end{lemma}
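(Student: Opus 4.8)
The plan is to estimate $\tilde\rho(\bb)$ directly via Lemma \ref{lem:specrad}, which tells us that $\tilde\rho(\bb)=\rho(M')$ with $M'=\left(\beta^{\gamma_i}\frac{\partial\phi^{(i)}}{\partial x_j}(\bb)\right)_{i,j}$, so it suffices to bound $\rho(M')$. Since all entries of $M'$ are nonnegative and $\rho(M')\le\max_i\sum_j M'_{i,j}$ (the maximum absolute row sum bound on the spectral radius), I would reduce everything to showing $\sum_j M'_{i,j}<1$ for each $i$ when $\bb\in A_+\setminus\{\bm 0\}$ is small. Note $\sum_{j=1}^K\frac{\partial\phi^{(i)}}{\partial x_j}(\bb)$ is a directional-type sum of derivatives of $\phi^{(i)}$; since $\phi^{(i)}$ is entire with nonnegative Taylor coefficients and $\phi^{(i)}(\bm 0)=\zeta^{(i)}(\emptyset)>0$ by \ref{cond:jamaiszero}, each $\frac{\partial\phi^{(i)}}{\partial x_j}(\bm 0)$ is finite and $\frac{\partial\phi^{(i)}}{\partial x_j}(\bb)\to\frac{\partial\phi^{(i)}}{\partial x_j}(\bm 0)$ as $\bb\to\bm 0$.

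\textbf{Key steps.} First, I would extract the behaviour of $\beta$ near $\bm 0$ on the curve: from the first line of \eqref{eq:casesbis}, $\beta=b_1/\phi^{(1)}(\bb)$, and as $\bb\to\bm 0$ along $\cC$ we have $\phi^{(1)}(\bb)\to\phi^{(1)}(\bm 0)>0$ while $b_1\to 0$, so $\beta\to 0$. Hence $\beta^{\gamma_i}\to 0$ for every $i\in[K]$ (as $\gamma_i\ge 1$). Second, the row sums: $\sum_{j=1}^K M'_{i,j}=\beta^{\gamma_i}\sum_{j=1}^K\frac{\partial\phi^{(i)}}{\partial x_j}(\bb)$. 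The sum $\sum_j\frac{\partial\phi^{(i)}}{\partial x_j}(\bb)$ is continuous near $\bm 0$ (each term is, by the entire assumption \ref{cond:entire}) and converges to $\sum_j\frac{\partial\phi^{(i)}}{\partial x_j}(\bm 0)$, which is a finite constant $C_i\ge 0$. Therefore $\sum_j M'_{i,j}\le \beta^{\gamma_i}(C_i+1)\to 0$ as $\bb\to\bm 0$ along $A_+$. Third, choose $r>0$ small enough that on $\{\bb\in A_+\setminus\{\bm 0\}:\sup_i b_i\le r\}$ we simultaneously have $\beta=b_1/\phi^{(1)}(\bb)\le\varepsilon$ (possible since $b_1\le r$ and $\phi^{(1)}$ is bounded below near $\bm 0$) and $\sum_j\frac{\partial\phi^{(i)}}{\partial x_j}(\bb)\le C_i+1$ for all $i$; then $\max_i\sum_j M'_{i,j}\le\varepsilon(\max_i C_i+1)<1$ for $\varepsilon$ small, giving $\tilde\rho(\bb)=\rho(M')\le\max_i\sum_j M'_{i,j}<1$. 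One should double-check that on $A_+$ the first equation of \eqref{eq:casesbis} indeed pins $\beta$ down uniquely in terms of $\bb$, which it does since $b_1>0$ on $A_+\setminus\{\bm 0\}$ (all coordinates have the same sign near $\bm 0$ by Theorem \ref{thm:aroundzero}).

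\textbf{Main obstacle.} The only subtlety is the interaction between $\beta\to 0$ and the fact that $\bb$ ranges over a curve approaching $\bm 0$ rather than over a full neighbourhood: one must make sure the bounds are uniform over $A_+\setminus\{\bm 0\}$ inside the small box, not just along a fixed sequence. This is handled by noting every bound used — $\phi^{(1)}(\bb)\ge\phi^{(1)}(\bm 0)/2$, $\sum_j\frac{\partial\phi^{(i)}}{\partial x_j}(\bb)\le C_i+1$ — is a consequence of continuity of entire functions on a compact box $[0,r]^K$, hence uniform there, so shrinking $r$ suffices. A second minor point: I should confirm $\rho(M')\le\max_i\sum_jM'_{i,j}$ is legitimate here even though $M'$ need not be irreducible — but this is the standard elementary bound $\rho(M)\le\|M\|_\infty$ valid for any matrix, so no irreducibility is needed. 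Thus the lemma follows; in fact the argument shows the stronger statement that $\tilde\rho(\bb)\to 0$ as $\bb\to\bm 0$ along $A_+$, matching the claim in the strategy paragraph preceding the lemma.
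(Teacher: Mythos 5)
Your proof is correct and follows essentially the same route as the paper: both reduce to Lemma \ref{lem:specrad}, observe that $\beta=b_1/\phi^{(1)}(\bb)$ is forced to be small near $\bm{0}$ on $A_+$ by \ref{cond:jamaiszero} and \eqref{eq:casesbis}, and then bound $\rho(M')$ crudely and uniformly on a small box. The only (harmless) difference is the final bounding device: you use the row-sum norm $\rho(M')\le\max_i\sum_j M'_{i,j}$ with the derivative sums evaluated near $\bm{0}$, whereas the paper compares the entries of the derivative matrix at $\bb\le(1,\ldots,1)$ with the untilted mean matrix $M$ and invokes monotonicity of the spectral radius in nonnegative entries.
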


\begin{proof}[Proof of Lemma \ref{lem:rho<1}]
Define $\rho := \tilde{\rho}(1, \ldots, 1)$, the spectral radius of the original mean matrix. Observe that, by Assumption \ref{cond:jamaiszero} and \eqref{eq:casesbis}, for $\bb \in A_+ \setminus \{ \bm{0} \}$ close enough to $\bm{0}$, we have $\sup_{i \in [K]}\beta^{\gamma_i}<\frac{1}{2}\rho^{-1}$. Furthermore, if $b_i \leq 1$ for all $i \in [K]$, then 
\begin{align*}
\rho \left( \left( \frac{\partial \phi^{(i)}}{\partial b_j}(b_1, \ldots, b_K) \right)_{i,j \in [K]} \right) \leq \rho,
\end{align*}
since the spectral radius is a nondecreasing function of each coordinate (provided that they are all nonnegative). The result follows by Lemma \ref{lem:specrad}.
\end{proof}

The interest of this lemma is the following: since $\tilde{\rho}$ is continuous on $\cC$, we only need to show that there exists $\bb \in \cC$ such that $\tilde{\rho}(\bb) \geq 1$ (or directly $\bb \in A_+$ such that $\tilde{\rho}(\bb)=1$). To this end, we consider different cases, depending on whether of not $\overline{\cC}$ is compact in $\R_+^K$.

A first result of importance is the fact that $\cC$ cannot escape the cone $(0,+\infty)^K$, in the following sense. Recall that $A := \bigcap_{i,j \in [K]} G_{i,j}^{-1}(\{0\})$.

\begin{lemma}
\label{lem:noborder}
Let $\bb \in [0,\infty)^K \cap A$ such that there exists $i \in [K]$ for which $b_i=0$. Then, $b_i=0$ for all $i \in [K]$.
\end{lemma}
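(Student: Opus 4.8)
The plan is to exploit a single defining equation of $A$, namely $G_{i_0,j}(\bb)=0$ where $i_0$ is the index with $b_{i_0}=0$, together with the elementary fact that each $\phi^{(i)}$ is strictly positive on the nonnegative orthant. The point is that the term $b_{i_0}^{\gamma_j}$ appearing in $G_{i_0,j}$ vanishes (since $\gamma_j\in\N^*$ by \ref{condition}, so $0^{\gamma_j}=0$), which forces the complementary term $b_j^{\gamma_{i_0}}\bigl(\phi^{(i_0)}(\bb)\bigr)^{\gamma_j}$ to vanish as well, and then positivity of $\phi^{(i_0)}$ rules out the factor $\phi^{(i_0)}(\bb)$ being zero.

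Concretely, I would proceed as follows. First, record the observation that for every $i\in[K]$ and every $\bb\in[0,\infty)^K$ one has $\phi^{(i)}(\bb)\ge\phi^{(i)}(\bm 0)=\mu^{(i)}(\bm 0)=\zeta^{(i)}(\emptyset)>0$, the first inequality because $\phi^{(i)}$ is a power series with nonnegative coefficients evaluated at a nonnegative point, and the strict positivity by Assumption \ref{cond:jamaiszero}. Next, let $i_0\in[K]$ be such that $b_{i_0}=0$, and fix an arbitrary $j\in[K]$. Since $\bb\in A$ we have $G_{i_0,j}(\bb)=0$, i.e.
\begin{align*}
b_j^{\gamma_{i_0}}\bigl(\phi^{(i_0)}(\bb)\bigr)^{\gamma_j}=b_{i_0}^{\gamma_j}\bigl(\phi^{(j)}(\bb)\bigr)^{\gamma_{i_0}}=0,
\end{align*}
the last equality because $b_{i_0}=0$ and $\gamma_j\ge 1$. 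As $\phi^{(i_0)}(\bb)>0$ by the first step, this yields $b_j^{\gamma_{i_0}}=0$, hence $b_j=0$. Since $j\in[K]$ was arbitrary, $\bb=\bm 0$, as claimed.

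There is essentially no obstacle here: the statement reduces to a direct manipulation of one of the equations cutting out $A$, and the only input beyond that is the strict positivity of the generating functions at the origin, which is exactly Assumption \ref{cond:jamaiszero}. The mild care needed is to use $\gamma_j\ge 1$ (so that $b_{i_0}^{\gamma_j}=0$ rather than $b_{i_0}^{0}=1$), which is guaranteed by \ref{condition}, and to note that the argument uses only the pair of indices $(i_0,j)$, so no connectedness or further structure of $A$ is required.
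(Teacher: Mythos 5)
Your proposal is correct and is essentially the paper's own argument: the paper also fixes the index with $b_{i_0}=0$ (taken to be $1$ without loss of generality), uses $G_{i_0,j}(\bb)=0$ together with $\phi^{(i_0)}(\bb)>0$ from Assumption \ref{cond:jamaiszero}, and concludes $b_j=0$ for every $j$. Your write-up merely spells out the positivity of $\phi^{(i_0)}$ at $\bb$ and the role of $\gamma_j\ge 1$ a bit more explicitly, which is fine.
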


\begin{proof}
Assume without loss of generality that $b_1=0$. For all $2 \leq j \leq K$, since $G_{1,j}(\bb)=0$ and $\phi^{(1)}(b_1, \ldots, b_K) \neq 0$ (by Assumption \ref{cond:jamaiszero}), we have $b_j=0$.
\end{proof}

\subsubsection{If $\overline{\cC}$ is not compact}

Let us first consider the case where $\overline{\cC}$ is not compact.

\begin{theorem}
\label{thm:escapesfromcompact}
Assume that $\overline{\cC}$ is not compact. Then $\cC$ contains a good critical exponential tilting.
\end{theorem}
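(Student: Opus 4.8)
The plan is to show that, under the non-compactness assumption, following the curve $\cC$ outwards forces the tilted spectral radius $\tilde\rho(\bb)$ to blow up, so that by continuity (Lemma \ref{lem:rho<1} gives a starting point with $\tilde\rho<1$) and the intermediate value theorem there must be a point of $\cC$ where $\tilde\rho=1$. At such a point the tilting is critical, and by Proposition \ref{prop:goodexptilt} together with the discussion in Section \ref{ssec:setting} (the parameters on $\cC$ satisfy \eqref{eq:casesbis}, which is exactly \eqref{eq:cases} with $\bc \in S_\Gamma = (\operatorname{Ker}\Gamma)^\perp$), it is automatically a good exponential tilting. So the whole content is: \emph{$\tilde\rho$ is unbounded along $\cC$.}

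First I would fix the parametrization. By Theorem \ref{thm:aroundzero}, near $\bm 0$ the set $A$ (hence $A_+$) is a simple $C^\infty$ curve, and by Lemma \ref{lem:noborder} the curve $\cC$ stays inside $(0,\infty)^K$ once it leaves the origin; combined with Theorem \ref{thm:aroundzero} (all coordinates have the same sign near $\bm 0$) this means $\cC\setminus\{\bm 0\}\subseteq(0,\infty)^K$. Since $\overline\cC$ is not compact and $\cC$ is a connected subset of $(0,\infty)^K$ containing $\bm 0$ in its closure, $\cC$ is unbounded: it must contain points $\bb$ with $\sup_i b_i$ arbitrarily large (it cannot ``escape'' by having a coordinate tend to $0$, again by Lemma \ref{lem:noborder}, nor can it be bounded without $\overline\cC$ being compact). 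So I can pick a sequence $\bb^{(n)}\in\cC$ with $\max_i b_i^{(n)}\to\infty$.

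Next, the key estimate: $\tilde\rho(\bb)\to\infty$ as $\bb\in\cC$, $\|\bb\|\to\infty$. By Lemma \ref{lem:specrad}, $\tilde\rho(\bb)=\rho(M')$ with $M'_{i,j}=\beta^{\gamma_i}\frac{\partial\phi^{(i)}}{\partial x_j}(\bb)$, where $\beta$ is determined by \eqref{eq:casesbis}, namely $\beta=b_i^{1/\gamma_i}\phi^{(i)}(\bb)^{-1/\gamma_i}$ for each $i$. A standard lower bound for the spectral radius of a nonnegative matrix is that $\rho(M')\ge \max_i M'_{i,i}$ (or more robustly $\rho(M')\ge\frac{1}{K}\sum_{i,j}M'_{i,j}$ when the matrix is irreducible, via Perron--Frobenius and the fact that $\rho$ dominates the minimal row sum and is dominated by the maximal; actually the cleanest is $\rho(M')\ge M'_{i_0,i_0}$ for any $i_0$). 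So it suffices to find, for each large $\bb\in\cC$, some index $i_0$ with $\beta^{\gamma_{i_0}}\frac{\partial\phi^{(i_0)}}{\partial x_{i_0}}(\bb)$ large. Take $i_0$ to be an index achieving $b_{i_0}=\max_i b_i$. Then $\beta^{\gamma_{i_0}}=b_{i_0}/\phi^{(i_0)}(\bb)$, so
\begin{align*}
\tilde\rho(\bb)\ \ge\ \beta^{\gamma_{i_0}}\frac{\partial\phi^{(i_0)}}{\partial x_{i_0}}(\bb)\ =\ \frac{b_{i_0}}{\phi^{(i_0)}(\bb)}\cdot\frac{\partial\phi^{(i_0)}}{\partial x_{i_0}}(\bb).
\end{align*}
This is exactly where Assumption \ref{cond:escape} enters: for $b_{i_0}$ large enough, uniformly in the other coordinates, $\frac{\partial\phi^{(i_0)}}{\partial x_{i_0}}(\bb)\ge \phi^{(i_0)}(\bb)/b_{i_0}$ — but that only gives $\tilde\rho\ge 1$, not divergence. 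To get strict blow-up I would use a slightly strengthened reading: \ref{cond:escape} as stated gives $\tilde\rho(\bb)\ge 1$ for all $\bb\in\cC$ with $\max_i b_i$ large, which, combined with Lemma \ref{lem:rho<1} ($\tilde\rho<1$ near $\bm0$) and continuity of $\tilde\rho$ along the connected curve $\cC$, already yields a point with $\tilde\rho(\bb)=1$ by the intermediate value theorem. If $\tilde\rho$ never hit $1$ it would be $<1$ everywhere on the bounded part and then would have to jump; so there is $\bb^\star\in\cC$ with $\tilde\rho(\bb^\star)=1$.

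Finally I would assemble the conclusion: the point $\bb^\star\in\cC\subseteq A_+$ has $\tilde\rho(\bb^\star)=1$, so the associated tilted projection $\tbmu$ is critical; since $\bb^\star$ satisfies \eqref{eq:casesbis}, the parameters $(a_i,b_i^\star)$ satisfy \eqref{eq:cases} with $\bc=(\log(a_ib_i^\star))_i\in S_\Gamma=(\operatorname{Ker}\Gamma)^\perp$ (by the computation $c_i/\gamma_i=\log\beta$ from Section \ref{ssec:setting}), hence by Proposition \ref{prop:goodexptilt} it is a good exponential tilting. Thus $\cC$ contains a good critical exponential tilting. The main obstacle I anticipate is the spectral-radius lower bound step — making rigorous that $\tilde\rho(\bb)\ge1$ on the far end of $\cC$ from \ref{cond:escape} requires care that the "uniformly in $(b_j)_{j\ne i}$" clause applies along $\cC$ (where the other coordinates need not be bounded), and that one correctly picks the index $i_0$ with the largest coordinate so that $\beta^{\gamma_{i_0}}$ is controlled; one also needs irreducibility (or a direct diagonal-entry argument) to pass from the single entry $M'_{i_0,i_0}$ to $\rho(M')$. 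Monotonicity of $\tilde\rho$ in the coordinates (used in Lemma \ref{lem:rho<1}) does not obviously help push it up along $\cC$ since $\beta$ also varies, so the estimate really has to come from \ref{cond:escape}.
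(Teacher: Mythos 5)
Your argument is correct and follows essentially the same route as the paper: continuity of $\tilde{\rho}$ on $\cC$, Lemma \ref{lem:rho<1} near the origin, Assumption \ref{cond:escape} combined with $\beta^{\gamma_{i_0}} = b_{i_0}/\phi^{(i_0)}(\bb)$ from \eqref{eq:casesbis} to get $\tilde{\rho} \geq 1$ for points of $\cC$ with a large coordinate, and the intermediate value theorem, with Proposition \ref{prop:goodexptilt} giving goodness of the tilting. Your anticipated obstacle about irreducibility is not an issue: the bound $\tilde{\rho}(\bb) = \rho(M') \geq M'_{i_0,i_0}$ holds for any nonnegative matrix, by the same monotonicity of the spectral radius in the entries that the paper invokes.
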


\begin{proof}
It is clear that $\tilde{\rho}$ is continuous on $\cC$. Furthermore, by Lemma \ref{lem:rho<1}, for $(b_1, \ldots, b_K) \in A_+ \setminus \{ \bm{0} \}$ close enough to $\bm{0}$, we have $\tilde{\rho}(b_1, \ldots, b_K) < 1$. Hence, it suffices to prove that, for $(b_1, \ldots, b_K) \in \cC$ far enough from $\bm{0}$, we have $\tilde{\rho}(b_1, \ldots, b_K) \geq 1$. By Lemma \ref{lem:noborder}, we only need to consider points in the cone $\R_+^K$. To this end, assume without loss of generality that $b_1 \rightarrow +\infty$ on $\cC$ (possibly along a subsequence). By Assumption \ref{cond:escape}, we have, for $b_1$ large enough, uniformly in $b_2,\ldots, b_K \in \R_+$,
\begin{align*}
\frac{\partial \phi^{(1)}(b_1, \ldots, b_K)}{\partial b_1} \geq \frac{\phi^{(1)}(b_1, b_2, \ldots, b_K)}{b_1}.
\end{align*}
\noindent Now observe that the spectral radius of a matrix is nondecreasing in all coefficients, and the spectral radius of the matrix $$\left(\frac{\partial \phi^{(1)}(b_1, b_2 \ldots, b_K)}{\partial b_1} \mathbbm{1}_{i=j=1}\right)_{1 \leq i,j \leq K}$$ is $\frac{\partial \phi^{(1)}(b_1, b_2 \ldots, b_K)}{\partial b_1}$.
We get therefore that $$\tilde{\rho}(b_1, \ldots, b_K) \geq \beta^{\gamma_1} \frac{\partial \phi^{(1)}(b_1, b_2 \ldots, b_K)}{\partial b_1} \geq \beta^{\gamma_1} \frac{\phi^{(1)}(b_1, b_2, \ldots, b_K)}{b_1} = 1.$$ The result follows.
\end{proof}

\subsubsection{The set of degenerate points}

Assume now that $\overline{\cC}$ is compact. The rest of the proof is based on the study of the set of degenerate points, that is, points around which $\cC$ is not the graph of a function of one of the $b_i$'s.

Let us first introduce some functions, slightly different from the $G_{i,j}$'s. For all $i, j \in [K]$, define
\begin{align*}
H_{i,j}(b_1, \ldots, b_K) = b_j^{1/\gamma_j} \left( \phi^{(i)}(b_1, \ldots, b_K) \right)^{1/\gamma_i} - b_i^{1/\gamma_i} \left( \phi^{(j)}(b_1, \ldots, b_K) \right)^{1/\gamma_j},
\end{align*}
and the associated Jacobian matrices $(I^{(i)}_{i \in [K]}) \in \cM_{K-1, K-1}$ defined as
\begin{align*}
I^{(i)}(b_1, \ldots, b_K) = \left( \frac{\partial H_{i,j}}{\partial b_{j'}}(b_1, \ldots, b_K) \right)_{j,j' \neq i}.
\end{align*}

Observe in particular that $A_+ = \{ \bm{0}\} \cup \bigcap_{i,j \in [K]} H_{i,j}^{-1}(\{0\})$. For convenience, we still label the rows and columns of $I^{(i)}$ by $[K] \backslash \{ i \}$ and not $[K-1]$. We also define the set of degenerate points as follows:
\begin{align*}
E := \left\{ \bb \in (0,\infty)^K, \, \forall i \in [K], \, det \, I^{(i)}(\bb) = 0 \right\}
\end{align*}

Our proof is divided in several parts, which we informally describe. First, we show that the value of $\tilde{\rho}$ at any degenerate point in $A_+$ is $\geq 1$. Second, we show that, if $\overline{\cC}$ is compact, then $\overline{\cC}$ necessarily contains a degenerate point $x$. Studying separately the cases $x \in \cC$ and $x \notin \cC$, we complete the proof of the existence of a good critical exponential tilting.

\begin{theorem}
\label{thm:rhoatleast1}
Let $\bb \in E$. Then, $\tilde{\rho}(\bb) \geq 1$.
\end{theorem}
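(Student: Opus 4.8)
The claim is that at any degenerate point $\bb \in E$, the tilted spectral radius satisfies $\tilde\rho(\bb) \geq 1$. By Lemma~\ref{lem:specrad}, $\tilde\rho(\bb) = \rho(M')$ with $M'_{i,j} = \beta^{\gamma_i}\,\partial_j\phi^{(i)}(\bb)$; so the goal is to extract a lower bound on $\rho(M')$ from the vanishing of all the determinants $\det I^{(i)}(\bb)$. The strategy I would follow is to relate the matrices $I^{(i)}$ to $M'$ directly. Write out $\partial H_{i,j}/\partial b_{j'}(\bb)$ explicitly: differentiating $H_{i,j} = b_j^{1/\gamma_j}(\phi^{(i)})^{1/\gamma_i} - b_i^{1/\gamma_i}(\phi^{(j)})^{1/\gamma_i}$ and using the defining relations \eqref{eq:casesbis} (which on $A_+$ give $\beta(\phi^{(i)}/b_i)^{1/\gamma_i} = 1$, i.e. $(\phi^{(i)})^{1/\gamma_i} = \beta^{-1} b_i^{1/\gamma_i}$), one should be able to factor out the common scalar values $(\phi^{(i)})^{1/\gamma_i - 1}$, $b_j^{1/\gamma_j - 1}$, etc., and see that $I^{(i)}(\bb)$, up to multiplication on the left and right by invertible diagonal matrices (with strictly positive entries, since all $b_i > 0$ on $E$ and all $\phi^{(i)}(\bb) > 0$ by \ref{cond:jamaiszero}), equals the $(K-1)\times(K-1)$ submatrix of $\tfrac{1}{\gamma_i}M' - I$ obtained by deleting row $i$ and column $i$ — or something of exactly this shape. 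The key point: $\det I^{(i)}(\bb) = 0$ is then equivalent to the $i$-th principal cofactor of $M' - \text{(diagonal)}$ vanishing.

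Here is the mechanism I expect to make it work. Recall $\beta^{\gamma_i}\phi^{(i)}(\bb)/b_i = 1$ on $A_+$ by \eqref{eq:casesbis}; so if one sets $D = \mathrm{diag}(b_1,\ldots,b_K)$, the vector $\bb$ itself plays a special role: $M' D^{-1}$ applied to... actually the cleaner route is Euler-type. Since $\phi^{(i)}$ need not be homogeneous this fails directly, but the relation I want is: the matrix $N := D^{-1}M'D$ (similar to $M'$, hence same spectral radius) has the property that the vanishing of $\det I^{(i)}$ forces, for each $i$, a nonzero vector $v^{(i)}$ supported off coordinate $i$ with $(N - c_i \mathrm{Id})v^{(i)}$ supported on coordinate $i$ alone, for appropriate constants $c_i$ coming from the $1/\gamma_j$ exponents. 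Combining these $K$ conditions — one for each $i$ — should pin $N$ down enough to force $\rho(N) \geq 1$. The cleanest endgame is probably: show that the vanishing of all $\det I^{(i)}$ together with the constraints \eqref{eq:casesbis} implies that $\mathbf{1} = (1,\ldots,1)$ (or a strictly positive vector) is a subinvariant vector for $N$, i.e. $N\mathbf{1} \geq \mathbf{1}$ coordinatewise, which by Perron--Frobenius (the spectral radius of a nonnegative matrix is $\geq$ any $\lambda$ for which some positive $v$ has $Nv \geq \lambda v$) gives $\rho(N) = \rho(M') = \tilde\rho(\bb) \geq 1$.

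\textbf{Main obstacle.} The delicate part is the bookkeeping that turns "all $K$ cofactor determinants vanish" into a single positivity statement $N\mathbf{1}\geq\mathbf{1}$ (or the existence of a positive subinvariant vector). One vanishing cofactor gives a one-dimensional family of candidate eigenvector-like relations; one must check these $K$ families are mutually compatible and assemble into a genuine global inequality rather than just an eigenvalue-$1$ statement for some auxiliary matrix. I would handle this by exploiting the antisymmetry-type structure $H_{i,j} = -H_{j,i}$ (so $\partial_{j'}H_{i,j}$ and $\partial_{i'}H_{j,i}$ are tied together), which should make the $K$ conditions redundant in a way that collapses them to the desired conclusion; concretely, I expect that differentiating the identity $(\phi^{(i)})^{1/\gamma_i} = \beta^{-1}b_i^{1/\gamma_i}$ (valid along $\cC$, but here we only have a point of $E$, so one instead uses the algebraic relations $H_{i,j}(\bb)=0$ directly) in the null direction of $I^{(i)}$ yields exactly $\sum_j (M')_{i,j}\,\xi_j \geq (\text{something})\,\xi_i$ with $\xi$ positive, and summing/comparing over $i$ closes the argument. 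I should also double-check the boundary behavior is irrelevant here since $E \subseteq (0,\infty)^K$ by definition, so all the diagonal factors I divide by are genuinely nonzero.
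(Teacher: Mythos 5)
You reduce correctly to bounding $\rho(M')$ via Lemma \ref{lem:specrad}, and you are right that the denominators are harmless since $E\subseteq(0,\infty)^K$ and $\phi^{(i)}(\bb)>0$ by \ref{cond:jamaiszero}. But the two structural guesses your plan rests on do not hold, and the step you yourself flag as the main obstacle is exactly the missing proof. First, $I^{(i)}(\bb)$ is not, up to diagonal factors, a principal submatrix of $\frac{1}{\gamma_i}M'-I$: differentiating $H_{i,j}$ gives, for $j,j'\neq i$, $I^{(i)}_{j,j'}=b_j^{1/\gamma_j}\,\partial_{j'}\bigl[(\phi^{(i)})^{1/\gamma_i}\bigr]-b_i^{1/\gamma_i}\,\partial_{j'}\bigl[(\phi^{(j)})^{1/\gamma_j}\bigr]+\mathbbm{1}_{j=j'}\tfrac{1}{\gamma_j}b_j^{1/\gamma_j-1}(\phi^{(i)})^{1/\gamma_i}$, so every row carries, besides the row-$j$ piece of an $M'$-like matrix, a cross term built from the gradient of the \emph{same} $\phi^{(i)}$; hence $\det I^{(i)}=0$ is not a cofactor condition on $M'$. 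Second, the endgame is off target: the degeneracy does not give the coordinatewise inequality $N\mathbf{1}\geq\mathbf{1}$ (a nonnegative matrix such as $\begin{pmatrix}0&2\\ 1/2&0\end{pmatrix}$ has $1$ in its spectrum while one row sum is $1/2$, and nothing forces all tilted row sums to exceed $1$ at a point of $E$), nor any positive subinvariant vector --- there is no sign control on anything produced by the kernels of the $I^{(i)}$'s. What the hypotheses actually yield, and what suffices, is that $1$ is an eigenvalue of $\tilde{M}$: the spectral radius dominates the modulus of every eigenvalue, so no Perron--Frobenius positivity is needed at all.

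The missing idea is the explicit construction of that eigenvector. The paper picks, for each $i$, a nonzero $z^{(i)}\in\ker I^{(i)}$ (coordinates indexed by $[K]\setminus\{i\}$), then chooses scalars $(d_1,\ldots,d_K)\neq\bm{0}$ subject to the single linear constraint \eqref{eq:di}, whose sole purpose is to cancel the $\phi^{(j)}$-gradient cross terms when one forms $Y_i=\sum_{j\neq i}d_j z_i^{(j)}$; a short case analysis shows the $d_j$'s can be chosen so that $Y\neq\bm{0}$. Using \eqref{eq:casesbis} to rewrite $(\phi^{(j)})^{1/\gamma_j}=\beta^{-1}b_j^{1/\gamma_j}$ in the diagonal term, the kernel relations then give exactly $M'Y=Y$, and conjugating by $\mathrm{diag}(b_1,\ldots,b_K)$ turns $Y$ into a $1$-eigenvector of $\tilde{M}$, whence $\tilde{\rho}(\bb)\geq 1$. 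Your antisymmetry/redundancy heuristic does not substitute for this: the $K$ kernels are a priori unrelated subspaces, and it is precisely the free parameters $d_j$ together with the constraint \eqref{eq:di} that make them compatible. As written, your argument stops where the real work begins, and the positivity it aims for is neither available nor needed.
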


As a corollary, we obtain the following:

\begin{cor}
\label{cor:ifchasadegeneratepoint}
Assume that $\cC \cap E \neq \emptyset$. Then, there exists a good critical exponential tilting. 
\end{cor}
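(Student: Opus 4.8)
The plan is to run an intermediate-value argument for $\tilde{\rho}$ along the curve $\cC$, exploiting that $\tilde{\rho}<1$ near $\bm{0}$ and $\tilde{\rho}\geq 1$ at degenerate points.

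First I would check that $\bb \mapsto \tilde{\rho}(\bb)$ is continuous on all of $\cC$. On $\cC \setminus \{\bm{0}\} \subseteq (0,\infty)^K$ this is immediate: by Lemma \ref{lem:specrad} the entries $\tilde{M}_{i,j} = \frac{b_j}{b_i}\,\beta^{\gamma_i}\frac{\partial \phi^{(i)}}{\partial x_j}(\bb)$ of the tilted mean matrix depend analytically on $\bb$ (the $\phi^{(i)}$ are entire by \ref{cond:entire} and positive by \ref{cond:jamaiszero}, so $a_i = \phi^{(i)}(\bb)^{-1}$ and $\beta = b_1/\phi^{(1)}(\bb)$ are well defined and smooth), and the spectral radius of a matrix depends continuously on its entries; at $\bb = \bm{0}$ the factor $b_j$ forces $\tilde{M}=0$, so $\tilde{\rho}$ extends continuously to $\bm{0}$ with $\tilde{\rho}(\bm{0})=0$. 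Since $\cC$ is connected, $\tilde{\rho}(\cC)$ is therefore an interval of $\R$.

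Next I would exhibit points of $\cC$ on both sides of $1$. By Theorem \ref{thm:aroundzero}, $\cC$ is a nontrivial curve through $\bm{0}$, hence it meets $A_+ \setminus \{\bm{0}\}$ in every neighbourhood of $\bm{0}$; by Lemma \ref{lem:rho<1}, at such a point one has $\tilde{\rho}<1$. On the other hand, by hypothesis there is some $x \in \cC \cap E$, and Theorem \ref{thm:rhoatleast1} gives $\tilde{\rho}(x) \geq 1$. So the interval $\tilde{\rho}(\cC)$ contains a value strictly below $1$ and a value at least $1$, hence it contains $1$: there is $\bb \in \cC$ with $\tilde{\rho}(\bb)=1$.

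Finally I would conclude. Since $\tilde{\rho}(\bm{0})=0 \neq 1$, this $\bb$ lies in $A_+ \setminus \{\bm{0}\} \subseteq (0,\infty)^K$, hence satisfies \eqref{eq:casesbis} for some $\beta>0$; by the construction of Section \ref{ssec:setting} the associated constants $(a_i,b_i)_{i\in[K]}$ have $\bc := (\log(a_ib_i))_{i\in[K]} \in S_\Gamma = (Ker \, \Gamma)^\perp$, so Proposition \ref{prop:goodexptilt} shows that $\{(a_i,b_i), i\in[K]\}$ is a good exponential tilting; and it is critical since $\tilde{\rho}(\bb)=1$ is, by definition, the spectral radius of the corresponding tilted mean matrix. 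I do not expect a serious obstacle here: the substance is carried by Theorems \ref{thm:aroundzero} and \ref{thm:rhoatleast1} and Lemma \ref{lem:rho<1}, and the only point needing a little care is the continuity of $\tilde{\rho}$ at $\bm{0}$, which lets the elementary interval argument apply verbatim to the connected set $\cC$.
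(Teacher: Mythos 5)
Your proposal is correct and follows essentially the same route as the paper, whose proof of this corollary is exactly the intermediate-value argument you spell out: continuity of $\tilde{\rho}$ on $\cC$, $\tilde{\rho}<1$ near $\bm{0}$ by Lemma \ref{lem:rho<1}, and $\tilde{\rho}\geq 1$ at the degenerate point by Theorem \ref{thm:rhoatleast1}, with Lemma \ref{lem:noborder} (or, as you note, the very definition of $A_+$) keeping everything inside the open cone. Your additional care about the behaviour of $\tilde{\rho}$ at $\bm{0}$ is a harmless elaboration of what Lemma \ref{lem:rho<1} already provides.
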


\begin{proof}[Proof of Corollary \ref{cor:ifchasadegeneratepoint}]
This is a simple consequence of Theorem \ref{thm:rhoatleast1} along with the continuity of $\tilde{\rho}$ on $\cC$ along with Lemmas \ref{lem:rho<1} and \ref{lem:noborder}.
\end{proof}

The idea of the proof of Theorem \ref{thm:rhoatleast1} is to exhibit an eigenvector of $\tilde{M}$ whose associated eigenvalue is $1$.

\begin{proof}[Proof of Theorem \ref{thm:rhoatleast1}]
Let $\bb \in E$, and recall that the matrix $\tilde{M}$ is defined as 
\begin{align*}
\tilde{M}_{i,j} = \frac{\partial \tphii}{\partial b_j}(1,\ldots,1).
\end{align*}

Recall that, since $\bb \in \cC$, there exists $\beta > 0$ such that, for all $i \in [K]$,
\begin{align*}
\beta \left(\frac{\phi^{(i)}(\bb)}{b_i}\right)^{1/\gamma_i} = 1 \text{ (see Section \ref{ssec:setting}).}
\end{align*}
Our claim is the following: if $\bb \in E$, then there exists a vector $Z$ satisfying 
\begin{equation}
\label{eq:toprove}
\tilde{M} Z = Z.
\end{equation}

\noindent In particular, $1$ is in the spectrum of $\tilde{M}$ and necessarily $\tilde{\rho} \geq 1$.

We first compute the matrix $I^{(i)}(\bb)$ at a point of $E$. Set for convenience $\delta_i = 1/\gamma_i$ for $i \in [K]$. In what follows, since it is clear by the context, all functions are taken at the point $\bb$.
By definition, for any $j,j' \neq i$, we have
\begin{align*}
I^{(i)}(\bb)_{j,j'} &= \frac{\partial H_{i,j}}{\partial b_{j'}}\\
&= b_j^{\delta_j} \frac{\partial \left[\phi^{(i)}\right]^{\delta_i}}{\partial b_{j'}} - b_i^{\delta_i} \frac{\partial \left[\phi^{(j)}\right]^{\delta_j}}{\partial b_{j'}} + \mathbbm{1}_{j=j'} \delta_j b_j^{\delta_j-1} \left(\phi^{(i)}\right)^{\delta_i}.
\end{align*}

We now choose, for each $i \in [K]$, an eigenvector $z^{(i)} := (z^{(i)}_j, j \neq i) \in Ker \, I^{(i)} \backslash \{ 0 \}$. This vector exists by assumption, since $(b_1, \ldots, b_K) \in E$. Again, we label its coordinates by $[K] \backslash \{ i \}$ for convenience.  We will construct a $1$-eigenvector $Z$ of $\tilde{M}$ as a linear combination of the $z^{(i)}$'s. To this end, let $(d_1, \ldots, d_K) \in \R^K \backslash \{ \bm{0} \}$ such that
\begin{align}
\label{eq:di}
\sum_{j=1}^K \frac{d_j}{b_j^{\delta_j}} \sum_{\substack{i=1 \\ i \neq j}}^K \frac{\partial \left[\phi^{(j)}\right]^{\delta_j}}{\partial b_i} z_i^{(j)} = 0.
\end{align}
Define the vector $Y$ whose coordinates satisfy
\begin{align*}
Y_i = \sum_{\substack{j=1 \\ j \neq i}}^K d_j z_i^{(j)}.
\end{align*}

\begin{lemma}
\label{lem:lastlemma}
The vector $Z := P^{-1}Y$ is a $1$-eigenvector of the matrix $\tilde{M}$, where $P:=diag(b_1,\ldots,b_K)$.
\end{lemma}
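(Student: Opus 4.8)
The goal is to verify that $Z := P^{-1}Y$ satisfies $\tilde{M}Z = Z$, equivalently that $M' Y = PP^{-1} M' P Z = P\tilde MZ$, so by Lemma \ref{lem:specrad} and the similarity $\tilde M = P^{-1}M'P$ it suffices to show $M'Y = PY$, i.e. for each row $i\in[K]$, $\sum_j \beta^{\gamma_i}\frac{\partial\phi^{(i)}}{\partial b_j}(\bb)\, Y_j = b_i Y_i$. The plan is to unpack $Y_j = \sum_{k\neq j} d_k z_j^{(k)}$, swap the order of summation, and for each fixed index $k$ collect the contribution $d_k \sum_{j\neq k}\bigl(\sum_i \beta^{\gamma_i}\partial_j\phi^{(i)}(\bb)\bigr)z_j^{(k)}$ — wait, more carefully, one should fix $k$ and look at $d_k \sum_{j\neq k} z_j^{(k)}\cdot(\text{$j$-th column of }\beta^{\gamma_\bullet}D\phi)$. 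I would first translate the derivative-of-power relations: since $[\phi^{(i)}]^{\delta_i} = (\beta b_i^{\delta_i})\cdot\beta^{-1}$ is, up to the constant $\beta^{-1}$, equal to $b_i^{\delta_i}$ on $\cC$ (that is exactly system \eqref{eq:casesbis}), one gets $\partial_{b_{j'}}[\phi^{(i)}]^{\delta_i} = \delta_i [\phi^{(i)}]^{\delta_i -1}\partial_{j'}\phi^{(i)}$, and on $\cC$ this can be rewritten using $[\phi^{(i)}]^{\delta_i}=\beta^{-1}b_i^{\delta_i}$ to express $\partial_{j'}\phi^{(i)}$ in terms of $\partial_{j'}[\phi^{(i)}]^{\delta_i}$ times an explicit power of $b_i$ and $\beta$. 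This rephrases everything in the $H_{i,j}$-world where the hypothesis $\bb\in E$ (i.e. $z^{(i)}\in\ker I^{(i)}$) lives.

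Next, I would write out $I^{(i)}(\bb)z^{(i)}=0$ coordinatewise: for each $j\neq i$,
\begin{align*}
b_j^{\delta_j}\sum_{j'\neq i} z_{j'}^{(i)}\,\partial_{b_{j'}}[\phi^{(i)}]^{\delta_i} \;-\; b_i^{\delta_i}\sum_{j'\neq i} z_{j'}^{(i)}\,\partial_{b_{j'}}[\phi^{(j)}]^{\delta_j} \;+\; \delta_j b_j^{\delta_j-1}(\phi^{(i)})^{\delta_i} z_j^{(i)} \;=\;0.
\end{align*}
The first sum $\sigma^{(i)} := \sum_{j'\neq i} z_{j'}^{(i)}\,\partial_{b_{j'}}[\phi^{(i)}]^{\delta_i}$ does not depend on $j$, so each equation reads $b_j^{\delta_j}\sigma^{(i)} - b_i^{\delta_i}\tau_j^{(i)} + \delta_j b_j^{\delta_j-1}(\phi^{(i)})^{\delta_i}z_j^{(i)}=0$ with $\tau_j^{(i)}:=\sum_{j'\neq i} z_{j'}^{(i)}\partial_{b_{j'}}[\phi^{(j)}]^{\delta_j}$. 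This lets me solve for $\delta_j b_j^{\delta_j-1}(\phi^{(i)})^{\delta_i}z_j^{(i)}$, and summing the contributions $d_i$ of these identities over $i\neq j$ against appropriate normalizing factors is where the definition \eqref{eq:di} of the $d_k$ enters: condition \eqref{eq:di} is exactly what is needed to kill the cross-terms $\sum_i d_i b_i^{\delta_i}\tau_j^{(i)}/(\cdots)$ (after dividing by $b_j^{\delta_j}$, the sum over $j$ weighted by $d_j/b_j^{\delta_j}$ of the $\sigma^{(j)}$-analogue vanishes — note \eqref{eq:di} is precisely $\sum_j (d_j/b_j^{\delta_j})\tau\text{-type}=0$ after relabelling). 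I would then substitute back into $\sum_j M'_{i,j}Y_j$, use the conversion between $\partial\phi^{(i)}$ and $\partial[\phi^{(i)}]^{\delta_i}$, and use \eqref{eq:casesbis} to replace $(\phi^{(i)})^{\delta_i}$ by $\beta^{-1}b_i^{\delta_i}$ and $\beta^{\gamma_i}$ by $(\phi^{(i)}/b_i)^{-1}$, and check the resulting expression collapses to $b_i Y_i$.

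The main obstacle, as usual in these eigenvector-exhibition arguments, is bookkeeping: matching the power-of-$b$ and power-of-$\beta$ prefactors that appear when passing between $\phi^{(i)}$, $[\phi^{(i)}]^{\delta_i}$, $M'$, and $\tilde M$, and making sure the asymmetry of the labelling (rows/columns indexed by $[K]\setminus\{i\}$, different $\delta_j$ for each $j$) is handled consistently. A secondary point to check is that $Y\neq 0$, so that $Z=P^{-1}Y\neq 0$ is a genuine eigenvector: this needs a short argument that the linear system \eqref{eq:di} in the unknowns $(d_1,\dots,d_K)$, being one equation in $K\ge 2$ unknowns, has a nonzero solution, and that no such solution forces all $Y_i=0$ — the latter presumably using that each $z^{(i)}\neq 0$ together with \ref{cond:jamaiszero} (so the $\phi^{(i)}(\bb)>0$ and no degeneracy of the coefficients occurs). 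I expect the computation to go through cleanly once the substitution dictionary is fixed, with \eqref{eq:di} doing exactly the job it was designed for.
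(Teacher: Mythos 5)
Your plan is, in substance, the paper's own proof: write the kernel relations $I^{(i)}(\bb)z^{(i)}=0$ coordinatewise, form $Y_i=\sum_{j\neq i}d_jz_i^{(j)}$, let \eqref{eq:di} kill the self-paired sums $\sum_{i'\neq j}\partial_{b_{i'}}[\phi^{(j)}]^{\delta_j}\,z_{i'}^{(j)}$ (your $\sigma^{(j)}$, which you later confusingly relabel ``$\tau$-type''), convert between $\partial_{b_{j'}}[\phi^{(i)}]^{\delta_i}$ and $\partial_{b_{j'}}\phi^{(i)}$ via \eqref{eq:casesbis}, and finish with the similarity $\tilde{M}=P^{-1}M'P$ of Lemma \ref{lem:specrad}. (The point that $Y\neq 0$ must be argued is correct, but in the paper it is treated in the discussion right after the statement of the lemma, not inside its proof, so it is not strictly part of what the lemma asserts.)

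There is, however, a concrete error in your reduction step, and as written your plan would fail at its final check. Since $Y=PZ$, the similarity gives $M'Y=M'PZ=P\tilde{M}Z$, so $\tilde{M}Z=Z$ is equivalent to $M'Y=PZ=Y$, i.e.\ to $\sum_j\beta^{\gamma_i}\frac{\partial\phi^{(i)}}{\partial b_j}(\bb)\,Y_j=Y_i$ for every $i$ --- \emph{not} to $M'Y=PY$, i.e.\ not to $\sum_j\beta^{\gamma_i}\frac{\partial\phi^{(i)}}{\partial b_j}(\bb)\,Y_j=b_iY_i$, which is the identity you announce you will verify (you have conflated $PZ$ with $PY$). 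Indeed, the computation you outline yields $\sum_{j}\frac{\partial[\phi^{(i)}]^{\delta_i}}{\partial b_j}(\bb)\,Y_j=\beta^{-1}\delta_ib_i^{\delta_i-1}Y_i$, and dividing by $\delta_i[\phi^{(i)}]^{\delta_i-1}$ and using $(\phi^{(i)}(\bb))^{\delta_i}=\beta^{-1}b_i^{\delta_i}$ (note: this, and not ``$b_i^{\delta_i}$'' as in your first garbled restatement of \eqref{eq:casesbis}) gives exactly $(M'Y)_i=Y_i$, off by the factor $b_i$ from your stated target. The fix is immediate --- replace the target $M'Y=PY$ by $M'Y=Y$ --- but until you do, the bookkeeping you describe cannot ``collapse to $b_iY_i$'', and the plan is internally inconsistent at its first step.
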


In particular, this immediately implies Theorem \ref{thm:rhoatleast1}. It is quite clear that we can choose $(d_1, \ldots, d_K)$ so that $Y$ is not the $0$ vector. Indeed, the following holds:
\begin{itemize}
\item if, for all $i \neq j$, $z_i^{(j)}=0$, then any $(d_1, \ldots, d_K) \in \R^K$ satisfies \eqref{eq:di}. In particular $(1,0, \ldots, 0)$ works, and there exists $i \in [K]$ such that $Y_i := z_i^{(1)} \neq 0$ (because $z^{(1)}$ is an eigenvector);
\item otherwise, let $i \neq j$ such that $z_i^{(j)} \neq 0$, and assume without loss of generality that $j=1$. 
\begin{itemize}
\item If $d_1=0$ for all $(d_1, \ldots, d_K)$ satisfying \eqref{eq:di}, then it means that the set of $(d_1, \ldots, d_K)$ satisfying \eqref{eq:di} is $\{ 0 \} \times \R^{K-1}$. Then, let $\ell \in [K]$ such that $z_\ell^{(2)} \neq 0$ and choose $(d_1, \ldots, d_K) = (0,1,0,\ldots,0)$. It satisfies \eqref{eq:di} and $Y_\ell = z_\ell^{(2)} \neq 0$.
\item otherwise, let $(d_1, \ldots, d_K)$ satisfying \eqref{eq:di} with $d_1 \neq 0$ and $d_\ell = 0$ for $\ell \notin \{ 1, i \}$ (such a solution exists since the space of solutions has dimension $\geq K-1$). We have in particular $Y_i = d_1 z_i^{(1)} \neq 0$.
\end{itemize}
\end{itemize}

\begin{proof}[Proof of Lemma \ref{lem:lastlemma}]
For all $i \neq j \in [K]$, by definition of $z^{(j)}$, we have
\begin{align}
\label{eq:zij}
0 &= \sum_{i'=1, i' \neq j}^K I^{(j)}_{i,i'} z_{i'}^{(j)} \nonumber\\
&= \sum_{i'=1, i' \neq j}^K \left( b_i^{\delta_i} \frac{\partial \left[\phi^{(j)}\right]^{\delta_j}}{\partial b_{i'}} - b_j^{\delta_j} \frac{\partial \left[\phi^{(i)}\right]^{\delta_i}}{\partial b_{i'}} \right) z_{i'}^{(j)} + \delta_i b_i^{\delta_i-1} \left(\phi^{(j)}\right)^{\delta_j} z_i^{(j)} \nonumber\\
&= \sum_{i'=1, i' \neq j}^K \left(  b_i^{\delta_i} \frac{\partial \left[\phi^{(j)}\right]^{\delta_j}}{\partial b_{i'}} - b_j^{\delta_j} \frac{\partial \left[\phi^{(i)}\right]^{\delta_i}}{\partial b_{i'}} \right) z_{i'}^{(j)} + \beta^{-1} \delta_i b_i^{\delta_i-1} b_j^{\delta_j} z_i^{(j)}.
\end{align}

For any $i \in [K]$, we have:

\begin{align*}
\sum_{i'=1}^K \frac{\partial \left[\phi^{(i)}\right]^{\delta_i}}{\partial b_{i'}} Y_{i'} &= \sum_{i'=1}^K \frac{\partial \left[\phi^{(i)}\right]^{\delta_i}}{\partial b_{i'}} \sum_{j=1, j \neq i'}^K d_j z_{i'}^{(j)}\\
&= \sum_{j=1}^K d_j \sum_{i'=1, i' \neq j}^K \frac{\partial \left[\phi^{(i)}\right]^{\delta_i}}{\partial b_{i'}} z_{i'}^{(j)}\\
&= \sum_{j=1}^K \frac{d_j}{b_j^{\delta_j}}  \left( \sum_{i'=1, i' \neq j}^K b_i^{\delta_i} \frac{\partial \left[\phi^{(j)}\right]^{\delta_j}}{\partial b_{i'}} z_{i'}^{(j)} \right) + \sum_{j=1}^K \frac{d_j}{b_j^{\delta_j}} \beta^{-1}\delta_i b_i^{\delta_i-1} b_j^{\delta_j} z_i^{(j)},
\end{align*}
by \eqref{eq:zij}. Now observe that
\begin{align*}
\sum_{j=1}^K \frac{d_j}{b_j^{\delta_j}}  \left( \sum_{i'=1, i' \neq j}^K b_i^{\delta_i} \frac{\partial \left[\phi^{(j)}\right]^{\delta_j}}{\partial b_{i'}} z_{i'}^{(j)} \right) &= b_i^{\delta_i} \sum_{j=1}^K \frac{d_j}{b_j^{\delta_j}} \sum_{i'=1, i' \neq j}^K \frac{\partial \left[\phi^{(j)}\right]^{\delta_j}}{\partial b_{i'}} z_{i'}^{(j)} = 0,
\end{align*}
by definition of $(d_1, \ldots, d_K)$. We are thus left with
\begin{align*}
\sum_{i'=1}^K \frac{\partial \left[\phi^{(i)}\right]^{\delta_i}}{\partial b_{i'}} Y_{i'} &= \beta^{-1} \sum_{j=1}^K d_j \delta_i b_i^{\delta_i-1} z_i^{(j)}\\
&= \beta^{-1} \delta_i b_i^{\delta_i-1} Y_i,
\end{align*}
which can be rewritten
\begin{align*}
\sum_{i'=1}^K \frac{\partial \phi^{(i)}}{\partial b_{i'}} Y_{i'} = \beta^{-\frac{1}{\delta_i}} Y_i.
\end{align*}

This implies that $M' Y = Y$, where $M'$ is the matrix defined in Lemma \ref{lem:specrad}. By Lemma \ref{lem:specrad} again, it is equivalent to saying that $\tilde{M} P^{-1} Y = P^{-1} Y$.
\end{proof}
\end{proof}

\subsubsection{If $\cC \cap E=\emptyset$.}

The last case to consider is the case where $\cC$ does not contain any element of $E$. By the implicit function theorem, $\cC$ is locally, around each of the points of $\cC \backslash \{ \bm{0} \}$, the graph of a function of $b_i$ for some $i \in [K]$.

\begin{proposition}
\label{prop:ifnodegeneratepoint}
Assume that $\overline{\cC}$ is compact and that $\cC \cap E = \emptyset$. Then, there exists a good exponential tilting in $A_+$.
\end{proposition}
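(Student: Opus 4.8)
The plan is to "walk along" the curve $\cC$ starting from $\bm{0}$ and use a continuity/connectedness argument to locate a point where $\tilde{\rho}=1$. First I would observe that, since $\cC\cap E=\emptyset$, the implicit function theorem applies at every point of $\cC\setminus\{\bm 0\}$: locally $\cC$ is the graph of a smooth function of one coordinate $b_i$, so $\cC$ is a one-dimensional $C^\infty$ submanifold of $(0,\infty)^K$ away from the origin, and together with the description near $\bm{0}$ from Theorem \ref{thm:aroundzero} it is a simple $C^1$ arc. Since $\overline{\cC}$ is assumed compact and, by Lemma \ref{lem:noborder}, $\cC$ cannot touch the boundary of the cone $(0,\infty)^K$ except at $\bm{0}$, the arc $\cC$ must be parametrizable as a path $c:[0,1)\to\R_+^K$ (or on a half-open interval) with $c(0)=\bm{0}$; I would then argue that $\overline{\cC}\setminus\cC$ reduces to a single limit point $\bb^*\in(0,\infty)^K$ (it lies in the open cone by Lemma \ref{lem:noborder}, and it cannot be a degenerate point since $\cC\cap E=\emptyset$ would then force, via the implicit function theorem at $\bb^*$, that $\cC$ extends past $\bb^*$, contradicting maximality of the connected component). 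The key structural point is therefore that $\cC$ is a simple arc from $\bm 0$ to an endpoint $\bb^*$, with $\bb^*\in A_+$.

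Next I would track $\tilde{\rho}$ along this arc. By Lemma \ref{lem:rho<1}, for points of $\cC$ near $\bm 0$ we have $\tilde{\rho}<1$, and $\tilde\rho$ is continuous on $\cC$ (it is the spectral radius of the Perron--Frobenius matrix $M'$ of Lemma \ref{lem:specrad}, whose entries depend continuously, indeed smoothly, on $\bb$ via the entire functions $\phi^{(i)}$). So by the intermediate value theorem it suffices to show $\tilde{\rho}(\bb^*)\ge 1$ at the endpoint. Here is where the hypothesis $\cC\cap E=\emptyset$ is used in the opposite direction: since the arc cannot be continued past $\bb^*$ and $\bb^*\notin E$, the implicit-function obstruction must come from the system \eqref{eq:casesbis} itself — concretely, I expect that at $\bb^*$ the curve $\cC$ "turns back" in such a way that some directional derivative of $\phi^{(i)}$ forces $\beta^{\gamma_i}\frac{\partial\phi^{(i)}}{\partial b_i}(\bb^*)\ge \phi^{(i)}(\bb^*)/b_i^*=\beta^{\gamma_i}\cdot\beta^{-\gamma_i}\cdot(\text{something})$... and using monotonicity of the spectral radius in the entries (as in the proof of Theorem \ref{thm:escapesfromcompact}) one extracts $\tilde\rho(\bb^*)\ge\beta^{\gamma_i}\frac{\partial\phi^{(i)}}{\partial b_i}(\bb^*)\ge 1$. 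An alternative, cleaner route: if $\tilde\rho<1$ everywhere on $\overline{\cC}$, derive a contradiction with compactness by showing the defining system can be solved for one more coordinate near $\bb^*$ (using that the Jacobian $I^{(i)}(\bb^*)$ is nonsingular because $\bb^*\notin E$), so $\cC$ would extend strictly beyond $\bb^*$, contradicting that $\cC$ is a connected component of $A_+$.

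The main obstacle I anticipate is making the "endpoint" analysis rigorous: one must rule out the possibilities that $\overline{\cC}\setminus\cC$ is empty (impossible once $\overline\cC$ is compact and $\cC$ is a non-closed $1$-manifold), that it contains more than one point, or that the arc accumulates on itself, and then one must correctly identify why the arc terminates at $\bb^*$ when no degenerate point is present. The honest resolution is that termination of a maximal connected component of the analytic set $A_+$ at an interior point $\bb^*$ of the open cone, with the Jacobian of $G$ (equivalently of $H$) nonsingular there, is impossible — so in fact the only consistent conclusion is that $\tilde\rho\ge1$ somewhere on $\cC$ before the curve is forced to exit any compact set, reconciling with the compactness of $\overline{\cC}$ only if $\tilde\rho=1$ is attained. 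I would therefore structure the final argument as: suppose for contradiction $\tilde\rho<1$ on all of $\cC$; then $\cC$ stays in a region where the implicit function theorem continues to apply with uniform control; hence $\cC$ is not closed and has no interior endpoint, forcing $\overline{\cC}$ non-compact, contradicting the hypothesis. Thus $\tilde\rho$ attains the value $1$ on $\cC$, yielding a good critical exponential tilting in $A_+$.
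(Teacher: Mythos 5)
Your overall plan (follow $\cC$ from the origin, use Lemma \ref{lem:rho<1} near $\bm{0}$, continuity of $\tilde\rho$, and an intermediate value argument, with the decisive work happening at the accumulation point of $\cC$) matches the paper's strategy, but the core step is handled incorrectly. The gap is your claim that the limit point $\bb^*\in\overline{\cC}\setminus\cC$ \emph{cannot} be a degenerate point ``since $\cC\cap E=\emptyset$.'' The hypothesis only excludes degenerate points \emph{on} $\cC$, not on $\overline{\cC}$, and the logic in fact runs the other way: if some $\det I^{(i)}(\bb^*)\neq 0$, the implicit function theorem makes $A_+$ a simple arc through $\bb^*$, and since points of $\cC$ accumulate there, $\bb^*$ would be connected to $\cC$ and hence lie in $\cC$ --- contradicting $\bb^*\notin\cC$. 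So the correct conclusion is $\bb^*\in E$ (which is perfectly compatible with $\cC\cap E=\emptyset$), and this is exactly what the paper uses: Theorem \ref{thm:rhoatleast1} applied at this degenerate accumulation point gives $\tilde\rho(\bb^*)\geq 1$, after which continuity of $\tilde\rho$ and Lemmas \ref{lem:rho<1} and \ref{lem:noborder} finish the argument. Because you ruled out $\bb^*\in E$, you are left without any mechanism to prove $\tilde\rho(\bb^*)\geq 1$, and indeed your sketch of that inequality (the curve ``turns back'' forcing a derivative bound) is left incomplete and is not a proof.

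Your fallback contradiction argument also fails: you assert that if $\tilde\rho<1$ on all of $\cC$ then the implicit function theorem applies ``with uniform control,'' so $\cC$ has no interior endpoint and $\overline{\cC}$ must be non-compact. But non-degeneracy at the points of $\cC$ gives only local extendability \emph{at those points}; it says nothing at the accumulation point, and a curve homeomorphic to $\R_+$ can very well have compact closure by accumulating on a point not belonging to it --- this is precisely the scenario the paper analyzes (after first ruling out $\cC\cong[0,1]$, whose second endpoint would lie in $\cC\cap E$). Also, your assertion that $\overline{\cC}\setminus\cC$ is a single point is neither needed nor justified; one accumulation point of a sequence $f^{-1}(n)$ suffices. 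To repair the proof, keep your arc structure and IVT framework, but replace the endpoint analysis by: $\cC\cong\R_+$, pick an accumulation point $x_\infty$ of $f^{-1}(n)$ in the compact $\overline{\cC}$, show $x_\infty\in(\R_+^*)^K\setminus\cC$ (Lemma \ref{lem:noborder} and Theorem \ref{thm:aroundzero}), deduce $x_\infty\in E$ as above, and invoke Theorem \ref{thm:rhoatleast1}.
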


\begin{proof}
Since $\cC \cap E=\emptyset$, for any $\bb \in \overline{\cC} \cap (\R_+^*)^K$, there exists $i \in [K]$ such that $det I^{(i)}(\bb) \neq 0$. Then, $\cC$ is a $1$-dimensional connected manifold with boundary $\partial \cC \subset \R_+^K \backslash (\R_+^*)^K$. It is known that then it is homeomorphic to either $\R,\R_+$, the circle $\mathbb{S}^1$ or the interval $[0,1]$. Since its boundary contains $\bm{0}$, $\partial \cC$ is nonempty and $\cC$ is homeomorphic to either $[0,1]$ or $\R+$. If there exists a homeomorphism $f:\cC \rightarrow [0,1]$, then one can assume without loss of generality that $f(\bm{0})=0$. In this case, let $x:=f^{-1}(1)$. Necessarily, by Lemma \ref{lem:noborder}, $x \in (\R_+^*)^K$ and $det I^{(i)}(x) = 0$ for all $i \in [K]$. Hence, $x \in \cC \cap E$, which contradicts our assumption. Therefore, there exists a homeomorphism $f:\cC \rightarrow \R_+$. Clearly, $f(\bm{0})=0$. Consider the sequence $(x_n)_{n \geq 1} := (f^{-1}(n))_{n \geq 1}$. Since $\overline{\cC}$ is compact, $(x_n)_{n \geq 1}$ has an accumulation point in $\R_+^K$, say $x_\infty$. Furthermore, $x_\infty \in \overline{\cC} \cap (\R_+^*)^K$ by Lemma \ref{lem:noborder}. Indeed, by Theorem \ref{thm:aroundzero}, $x_\infty \neq \bm{0}$. In addition, since $\cC$ is a manifold with boundary $\partial \cC = \{ \bm{0} \}$, necessarily $x_\infty \notin \cC$. In particular, $det I^{(i)}(x_\infty) = 0$ for all $i \in [K]$ and $x_\infty \in E$.

Observe now that, since $A$ is closed, we have that $x_\infty \in A$. Thus, if $\tilde{\rho}(x_\infty)=1$, then $x_\infty$ corresponds to a good exponential tilting. If $\tilde{\rho}(x_\infty)\neq 1$, then by Theorem \ref{thm:rhoatleast1} we have $\tilde{\rho}(x_\infty) > 1$. By definition of $x_\infty$, there exists $n>0$ such that  $\tilde{\rho}(x_n) > 1$. We conclude by continuity of $\tilde{\rho}$ and Lemmas \ref{lem:rho<1} and \ref{lem:noborder}.
\end{proof}

We can finally prove our main theorem.

\begin{proof}[Proof of Theorem \ref{thm:main}]
It is a consequence of Theorem \ref{thm:escapesfromcompact}, Corollary \ref{cor:ifchasadegeneratepoint} and Proposition \ref{prop:ifnodegeneratepoint}.
\end{proof}

\section{Convergence of conditioned BGW trees}
\label{sec:convergenceoftrees}

In this final section, we prove Corollary \ref{cor:loclim} as a consequence of Theorem \ref{thm:main}, and deduce from it the second part of Theorem \ref{thm:main}. Let $\bze$ be an offspring distribution satisfying \ref{cond:entire}-\ref{cond:escape}, and let $\Gamma := (\gamma_1, \ldots, \gamma_K)$ satisfying \ref{condition}. The main idea is that, by Theorem \ref{thm:main}, there exists a critical distribution equivalent to $\bze$. We then invoke \cite[Theorem $3.1$]{Ste18} to conclude the proof.

\subsection{Kesten-like trees}

We construct here the infinite discrete trees that appear as local limits of critical multitype BGW trees. It turns out that they all share a common structure: a unique end (infinite spine), on which are grafted independent multitype trees that are identically distributed conditionally on their root label. In regard of Kesten's seminal work \cite{Kes86}, we will call these trees Kesten-like trees. This multitype construction was first introduced in \cite{KLPR97}, see also \cite[Proposition $3.1$]{Ste18} for a proof in the broader case of mutitype forests.

Let $\bze$ be an irreducible $K$-type critical distribution. The Perron-Frobenius theorem ensures that, under this irreducibility assumption, $M$ has a real eigenvalue $\rho>0$ of maximal modulus which is simple, and every $\rho$-eigenvector of $M$ has only non-zero coordinates, all of the same sign. Denote by $\br:=(r_1, \ldots, r_K)$ the renormalized right $1$-eigenvector of the mean matrix $M$. Denote by $\bhz:=(\hz^{1)}, \ldots, \hz^{(K)})$ the biased family of distributions defined as:
\begin{align*}
\forall j \in [K], \forall \bx \in \cW_K, \hz^{(j)}(\bx) = \frac{1}{r_j} \sum_{\ell=1}^{|\bx|} r_{x_\ell} \zeta^{(j)}(\bx),
\end{align*}
where $|\bx|$ denotes the length of $\bx$. In particular, $\hz^{(j)}(\emptyset) = 0$.

\begin{definition}
Let $\bze$ be a $K$-type critical distribution
Given a type $i \in [K]$, we define the tree $\cT^{(i)}_*$ as follows: it is made of a spine, which is an infinite branch starting from the root which has label $i$. On this infinite branch, vertices have distribution $\bhz$. Given an element $v$ of the spine, denote by $\bw_v$ its ordered list of offspring types. Then, the probability that the child of $v$ belonging to the infinite spine is $vj$ (that is, the $j$-th of its children) is proportional to $r_{\ell(vj)}$ - that is, equal to
\begin{align*}
\frac{r_{\ell(vj)}}{\sum_{i=1}^{|\bw_v|} r_{\ell(vi)}}.
\end{align*}

Finally, on any offspring of type $j$ of a vertex of the spine that is not itself on the spine, we graft a tree $\cT^{(j)}$ that is independent of the rest of the tree.
\end{definition}

In the monotype case, the child of a vertex on the spine that will be itself on the spine is just chosen uniformly at random. Observe also that, since $\hz^{(j)}(\emptyset)=0$ for all $j \in [K]$, the spine is indeed infinite.

We mention the following local limit result concerning multitype trees.

\begin{theorem}[Stephenson \cite{Ste18}, Theorem $3.1$]
\label{thm:loclimmulti}
Assume that $\bze$ is nondegenerate, critical and irreducible, and that $\bze$ has small exponential moments, in the sense that

\begin{align*}
\exists z > 1, \forall i \in [K], \sum_{w \in \cW_K} \zeta^{(i)}(w) z^{\sum w_i} < \infty.
\end{align*}

Fix in addition $\Gamma := (\gamma_1, \ldots, \gamma_K) \in \cM_{1,K}(\N)$, such that at least one of the $\gamma_i$'s is nonzero. Fix $j \in [K]$, and let $(k_n)_{n \geq 1}$ be a sequence of positive integers going to $+\infty$, such that, for all $n$
\begin{align*}
\P \left( \sum_{i=1}^d \gamma_i N_i\left( \cT^{(j)} \right) = k_n \right) > 0.
\end{align*}

Then, we have

\begin{align*}
\cT_{\Gamma,k_n}^{(j)} \underset{n \rightarrow \infty}{\overset{(d)}{\rightarrow}} \cT_{*}^{(j)},
\end{align*}
where $\cT_{*}^{(j)}$ is the multitype Kesten tree associated to $\tbze$. 
\end{theorem}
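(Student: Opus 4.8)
The plan is to prove the local convergence in the standard way, by establishing pointwise convergence of the ball probabilities. Fix the root type $j$, an integer $r\ge 0$, and a finite rooted plane $K$-tree $\tau$ of height exactly $r$ with root of type $j$ and positive weight (trees of height $<r$ have vanishing probability, so they may be ignored). Write $w(\tau):=\prod_{u\in\tau,\,|u|<r}\zeta^{(\ell(u))}(\bw_u)$ for the product of offspring weights of the vertices of $\tau$ at height $<r$, let $\partial\tau$ be the set of vertices of $\tau$ at height exactly $r$, and set $Z^{(i)}_m:=\P\big(\sum_{q=1}^K\gamma_q N_q(\cT^{(i)})=m\big)$. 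Conditioning on the first $r$ levels and using the branching property, what is grafted below a vertex $v\in\partial\tau$ is an independent $\bze$-BGW tree of root type $\ell(v)$; summing over the ways of completing $\tau$ so that the weighted type-count equals $k_n$ yields
\[
\P\big(B_r(\cT^{(j)}_{\Gamma,k_n})=\tau\big)\;=\;w(\tau)\,\frac{\P\big(\sum_{v\in\partial\tau}W_v=k_n-c_\tau\big)}{Z^{(j)}_{k_n}},
\]
where $c_\tau:=\sum_{q=1}^K\gamma_q N_q(\tau)-\sum_{v\in\partial\tau}\gamma_{\ell(v)}$ is an explicit integer shift and the $W_v$ are independent with $W_v\overset{(d)}{=}\sum_{q=1}^K\gamma_q N_q(\cT^{(\ell(v))})$.

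The heart of the argument is the asymptotics of the partition functions, which I would obtain via the generating functions $F_i(x):=\E\big[x^{\sum_q\gamma_q N_q(\cT^{(i)})}\big]$. The branching property gives the system $F_i(x)=x^{\gamma_i}\phi^{(i)}(F_1(x),\ldots,F_K(x))$, $i\in[K]$, and since $\bze$ is critical each $\cT^{(i)}$ is a.s.\ finite, so $F_i(1)=1$. Crucially, the Jacobian of the right-hand side at $x=1$, $F_1=\cdots=F_K=1$ is the mean matrix $M$, of spectral radius $1$: this is exactly the signature of a square-root singularity. Matching a putative expansion $F_i(x)=1-u_i\sqrt{1-x}+O(1-x)$ in the system forces $M\bm{u}=\bm{u}$, so (by irreducibility and Perron--Frobenius) $\bm{u}$ is proportional to the right eigenvector $\br$ of $M$, with a strictly positive constant since criticality makes $\sum_q\gamma_q N_q(\cT^{(i)})$ have infinite mean. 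The small exponential moment hypothesis makes the $\phi^{(i)}$, hence the $F_i$, analytic beyond the closed unit disc away from the roots of unity, so this is the only dominant singularity; a transfer theorem then yields $Z^{(i)}_m=[x^m]F_i(x)\sim\kappa\,r_i\,m^{-3/2}$ along the arithmetic progression supporting $\sum_q\gamma_q N_q(\cT^{(i)})$, with $\kappa$ independent of $i$. Applying the same transfer theorem to $\prod_{v\in\partial\tau}F_{\ell(v)}(x)$ (whose $\sqrt{1-x}$-coefficient is, by Leibniz, $\sum_{v\in\partial\tau}u_{\ell(v)}$) gives $\P\big(\sum_{v\in\partial\tau}W_v=m\big)\sim\kappa\,\big(\sum_{v\in\partial\tau}r_{\ell(v)}\big)\,m^{-3/2}$. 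One must also keep track of the common arithmetic period $d$ of these supports and check it is consistent with the shift $c_\tau$, so that for $n$ large $k_n-c_\tau$ lies in the relevant residue class whenever $k_n$ does; this holds because grafting $\bze$-subtrees preserves the class, and the factor $d$ cancels in the ratio.

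Plugging these into the displayed identity, the $m^{-3/2}$ factors (and the period factors) cancel, $c_\tau$ is negligible, and we get
\[
\P\big(B_r(\cT^{(j)}_{\Gamma,k_n})=\tau\big)\;\longrightarrow\;w(\tau)\,\frac{\sum_{v\in\partial\tau}r_{\ell(v)}}{r_j}\qquad(n\to\infty).
\]
It remains to recognise the right-hand side as $\P\big(B_r(\cT^{(j)}_*)=\tau\big)$. Summing over which $v^*\in\partial\tau$ carries the spine, and using $\hz^{(i)}(\bx)=r_i^{-1}\big(\sum_{m}r_{x_m}\big)\zeta^{(i)}(\bx)$ together with the rule that the spine child of a spine vertex $v$ is its $m$-th child with probability $r_{\ell(vm)}/\sum_{m'}r_{\ell(vm')}$, the two factors combine at each spine step into $r_{\ell(vm)}r_{\ell(v)}^{-1}\zeta^{(\ell(v))}(\bw_v)$; the eigenvector ratios telescope along the ancestral path of $v^*$ to $r_{\ell(v^*)}/r_j$, while the surviving $\zeta$-weights (on the spine and on the off-spine subtrees) reassemble exactly into $w(\tau)$ --- precisely the limit above. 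Finally, since $\cT^{(j)}_*$ is a.s.\ locally finite we have $\sum_\tau\P(B_r(\cT^{(j)}_*)=\tau)=1$, so pointwise convergence of these countably many ball probabilities to a probability distribution upgrades to convergence in total variation of $B_r(\cT^{(j)}_{\Gamma,k_n})$, i.e.\ to the asserted local convergence.

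I expect the only genuinely delicate point to be the partition-function asymptotics $Z^{(i)}_m\sim\kappa\,r_i\,m^{-3/2}$ with the correct eigenvector prefactor, together with the bookkeeping of the common period of the supports; this is where criticality, irreducibility and the exponential-moment hypothesis all enter. Everything else is the familiar ``local convergence via ratios of partition functions'' scheme.
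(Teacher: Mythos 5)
First, a point of comparison you could not have known: the paper does not prove this statement at all --- it is imported verbatim as Theorem $3.1$ of Stephenson \cite{Ste18} and used as a black box, so there is no internal proof to measure yours against. Your sketch is therefore a genuinely independent route, and it is the analytic-combinatorial one (ball probabilities as ratios of partition functions, square-root singularity of the system $F_i(x)=x^{\gamma_i}\phi^{(i)}(F_1(x),\ldots,F_K(x))$, eigenvector identification $\bm{u}\propto\br$, then the telescoping computation along the spine), whereas Stephenson's proof is probabilistic, resting on local limit theorems for the vector of type populations rather than on singularity analysis. The skeleton of your argument is sound: the decomposition of the ball probability with the shift $c_\tau$ is correct, the first-order matching does force $M\bm{u}=\bm{u}$, and I checked that your limit $w(\tau)\sum_{v\in\partial\tau}r_{\ell(v)}/r_j$ is exactly $\P\bigl(B_r(\cT^{(j)}_*)=\tau\bigr)$ for the Kesten-like tree of the paper (incidentally, the limit should be the Kesten-like tree associated to $\bze$ itself, as in your computation; the ``$\tbze$'' in the transcribed statement is a slip carried over from the corollary).

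Two steps in your write-up are pointers rather than proofs, and they are where all the work of the theorem lives. (i) The asymptotics $Z^{(i)}_m\sim\kappa\,r_i\,m^{-3/2}$ do not follow from matching a putative expansion; one needs the Drmota--Lalley--Woods theorem for irreducible positive analytic systems (the exponential-moment hypothesis gives analyticity of the $\phi^{(i)}$ in a polydisc of radius $z>1$), in its periodic form. Also, your phrase that the $F_i$ are ``analytic beyond the closed unit disc away from the roots of unity'' is not right as stated: criticality forces their radius of convergence to be exactly $1$, and what the theory provides is analytic continuation to a $\Delta$-domain around each dominant singularity. (ii) The lattice bookkeeping is a genuine lemma, not bookkeeping: for every $\tau$ with $w(\tau)>0$ you must show that $k_n-c_\tau$ eventually lies in the support of $\sum_{v\in\partial\tau}W_v$ (eventual fullness of the relevant residue class for the weighted size of an irreducible critical multitype tree); otherwise the ball probability would vanish along a subsequence and could not converge to your positive limit. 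With these two inputs supplied from the literature, your argument closes, but as written they are the delicate core of the proof rather than routine details.
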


Corollary \ref{cor:loclim} is now just a consequence of Theorems \ref{thm:main} and \ref{thm:loclimmulti}.

\begin{proof}[Proof of Corollary \ref{cor:loclim}]
Let us consider a critical distribution $\tbze$ which is $\Gamma$-equivalent to $\bze$. Such a distribution exists by Theorem \ref{thm:main}. It is clear that, since $\bze$ is entire, nondegenerate and irreducible, the same holds for $\tbze$. In particular, since it is entire it has small exponential moments. The result follows.
\end{proof}

We finally prove the second part of Theorem \ref{thm:main}.

\begin{proof}[End of the proof of Theorem \ref{thm:main}]
Observe that, for any $j \in [K]$, the distribution of the tree $\cT_*^{(j)}$ of Theorem \ref{thm:loclimmulti} uniquely determines $\tbze$. The uniqueness of a critical distribution that is $\Gamma$-equivalent to $\bze$ then follows directly.
\end{proof}

\section{Open questions}
\label{sec:questions}

Here are some related open questions, mainly about the assumptions that we make on $\bze$.

\begin{enumerate}[label=(\textbf{Q.\arabic*})]
\item Is it possible to loosen \ref{condition}, to allow matrices $\Gamma$ with coefficients equal to $0$? This would allow us to use Miermont \cite{Mie08}, Haas-Stephenson \cite{HS21}, Stephenson \cite{Ste18}, to obtain for free new limiting results for noncritical trees. Furthermore, \ref{condition} is only used at one point in the proof, to prove Theorem \ref{thm:aroundzero}.
\item To our knowledge, no scaling limit result exists when $\Gamma$ is not $(1,0,\ldots,0)$. Such results for critical trees would imply, by Theorem \ref{thm:main}, the same result for a larger class of trees.
\item As in the monotype case, it is possible to loosen Assumption \ref{cond:entire}. However, this would lead to new technical difficulties that we prefer not to tackle in this paper.
\item It would be interesting to loosen \ref{cond:jamaiszero}, and allow some types to always have children. This assumption is not relevant in the critical case, and should not be in our case either. However it is central in the proof of Theorem \ref{thm:main}, and it does not seem clear how to get rid of it.
\item The strongest assumption made in this paper is Assumption \ref{cond:escape}, about the behaviour of $\tilde{\rho}$ on $\cC$ far from the origin. Although such an estimate seems to be mandatory in our case, the result of Theorem \ref{thm:main} seems to hold even without this assumption. Proving it would however require a different argument.
\item Following Remark \ref{rk:equiv}, in the definition of $\Gamma$-equivalence for distributions, does (ii) imply (i) if $\Gamma \in \cM_{1,K}(\Z_+)$?
\item We can study the set $A_+$ directly in the case $rk(\Gamma) \geq 2$. For instance, if $rk(\Gamma)=K$, we have $A=\R^K$. Is $A_+$ a $rk(\Gamma)$-dimensional manifold with boundary? What can be said about it?
\item Is Corollary \ref{cor:loclim} still true for $rk(\Gamma) \geq 2$?
\end{enumerate}

\bibliographystyle{abbrv}
\bibliography{BibTreeInv}

\end{document}